\documentclass{article}

\usepackage{geometry}
\usepackage{graphicx,color}
\usepackage{tikz}
\usetikzlibrary{arrows.meta}
\usetikzlibrary{positioning}
\usepackage{amsmath}
\usepackage{amssymb}
\usepackage{amsthm}
\usepackage{enumerate}
\usepackage{mathtools}
\usepackage{subcaption}
\usepackage{cases}
\usepackage{multicol}
\usepackage{expl3}
\usepackage{hyperref}
\usepackage{wrapfig}

\usepackage[style=nejm, 
citestyle=numeric-comp,
sorting=none,
backend=biber]{biblatex}
\theoremstyle{plain}
\newtheorem{thm}{Theorem}[section]
\newtheorem{lemma}[thm]{Lemma}
\newtheorem{defn}[thm]{Definition}
\newtheorem{prop}[thm]{Proposition}

\newtheorem{assumption}[thm]{Assumption}

\newtheorem{remark}[thm]{Remark}

\numberwithin{equation}{section}

\newcommand{\R}{\mathbb{R}}

\newcommand{\N}{\mathbb{N}}
\newcommand{\K}{\mathcal{K}}
\newcommand{\LL}{\mathcal{L}}
\newcommand{\D}{\mathcal D}
\newcommand{\PP}{\mathcal{P}}
\newcommand{\RR}{\mathcal{R}}
\newcommand{\A}{\mathcal{A}}
\newcommand{\BB}{\mathcal{B}}
\newcommand{\CC}{\mathcal{C}}
\newcommand{\HH}{\mathcal{H}}
\renewcommand{\Re}{\mathrm{Re}}

\title{Input-to-state stability of second-order port-Hamiltonian systems under nonlinear dynamic boundary feedback}

\author{Bouchra Elghazi\thanks{University of Wuppertal, School of Mathematics and Natural Sciences, Gaußstraße 20, 42119 Wuppertal, Germany, \href{mailto:elghazi@uni-wuppertal}{elghazi@uni-wuppertal.de}.}
\and Birgit Jacob\thanks{University of Wuppertal, School of Mathematics and Natural Sciences, Gaußstraße 20, 42119 Wuppertal, Germany, \href{mailto:bjacob@uni-wuppertal}{bjacob@uni-wuppertal.de}.}
\and Christophe Prieur\thanks{Univ. Grenoble Alpes, CNRS, Grenoble INP, GIPSA-lab, France, \href{mailto:Christophe.Prieur@gipsa-lab.fr}{Christophe.Prieur@gipsa-lab.fr}.}}

\date{}

\begin{document}

\maketitle

\begin{abstract}
The input-to-state stability of a class of infinite-dimensional second-order port-Hamiltonian systems on a one-dimensional spatial domain is analyzed under nonlinear dynamic boundary feedback and boundary disturbances. Using energy methods in the port-Hamiltonian framework, we obtain sufficient conditions that guarantee uniform input-to-state stability of the closed-loop system.
\end{abstract}

\textbf{Keywords:} Port-Hamiltonian systems, input-to-state stability, nonlinear boundary control, Euler-Bernoulli beam

\section{Introduction}\label{intro}

Port-Hamiltonian systems form a class of dynamical systems that describe the flow and storage of energy via a Hamiltonian function and an underlying interconnection structure. When extended to distributed-parameter systems, one obtains boundary port-Hamiltonian systems, where the dynamics are governed by partial differential equations and the control inputs act at the boundary of the spatial domain. The port-Hamiltonian framework has become a leading approach for modelling and controlling complex physical systems \cite{SchaftCervera:07,Schaft:06,SchJelt:14,SchaftMaschke:02}. For a comprehensive survey of port-Hamiltonian systems, we refer to \cite{RashardScjaftStram:20}.
We focus on a class of second-order linear boundary-controlled port-Hamiltonian systems on a one-dimensional spatial domain, given by
 
\begin{equation*}
    \frac{\partial x}{\partial t}(\zeta,t) = \left( P_2 \frac{\partial^2}{\partial \zeta^2} + P_1 \frac{\partial}{\partial \zeta} + P_0 \right) \mathcal{H}(\zeta) x(\zeta,t), \quad t\ge0, \quad \zeta \in (0,1) ,
\end{equation*}
subject to the boundary conditions specified in Section \ref{section2}. For this class of port-Hamiltonian systems, fundamental system-theoretic properties such as well-posedness \cite{GorZwaMas:2005,Villegas:07,ElghJacZw:25}, observer design \cite{LeGorrecToledoRamirezWu:23}, and robust output regulation \cite{HumLassi:18,humaloja2016robust,PaunLeGorrecRamirez:21,HumLassiPohj:16} have been extensively studied.
\medskip

The stability of boundary-controlled port-Hamiltonian systems is naturally analysed using energy-based methods, since the Hamiltonian represents the total system energy. In this spirit, asymptotic and exponential stability via (static and dynamic) linear and nonlinear boundary feedback has been established in \cite{Aug:16,Aug:19,AugJac:14,Aug:20,RamirezZwartLeGorrec:17,SchmidZwart:21,MattioniWuLeGorrZw:22,RamirezLeGorrMachelliZW:14,Macchelli2004,macchelli2016synthesis,VillegasZwLeGorrMasche:09}. These stability notions are fundamental, but they do not fully capture robustness in the presence of external disturbances such as noise, unmodelled dynamics, or input perturbations. 
\medskip

In many physical applications, the decisive question is whether stability can be preserved under such disturbances.
To address this issue, input-to-state stability (ISS), introduced by Sontag in the late 1980s \cite{sontag1989}, has become a key concept. Informally, ISS guarantees that the system state remains bounded in proportion to the magnitude of the disturbances, and thus that external inputs cannot drive the energy to infinity. ISS provides a natural and powerful robustness notion for infinite-dimensional systems, and has been the subject of a rich literature; see, for example, \cite{Sontag1995,Mironchenko2023,Krstic2019,MironchenkoPrieur:20,MazencPrieur:2011,MironKarafKrstic:19}.
\medskip

In this contribution, we investigate the stabilization of second-order port-Hamiltonian systems via dynamic boundary control and analyse the ISS properties of the resulting closed-loop system. More precisely, we consider the interconnection of a second-order boundary port-Hamiltonian system with a class of nonlinear boundary controllers and study input-to-state stability with respect to boundary disturbances.
The class of nonlinear controllers under consideration includes mechanical and electrical systems with nonlinear potential energy. Using such controllers, \cite{Macchelli2004} established Lyapunov stability for a Timoshenko beam with a tip mass at one end and a controller at the other end. Building on this idea, \cite{RamirezZwartLeGorrec:17,SchmidZwart:21} analysed a more general class of first-order port-Hamiltonian systems (including, in particular, the wave equation and the Timoshenko beam equation) and obtained asymptotic, exponential, and input-to-state stability results with the same controller class. For second-order port-Hamiltonian systems, which encompass Euler–Bernoulli beam models, only an intermediate stability property has been derived so far \cite{SchmidZwart:21}. 
The present work closes this gap by focusing on input-to-state stability for the interconnection of a second-order port-Hamiltonian system with a nonlinear dynamic boundary controller. We derive energy-based sufficient conditions within the port-Hamiltonian framework that ensure uniform ISS of the closed-loop system with respect to boundary disturbances.

\medskip
The paper is organized as follows. In the next section, we formulate the problem and introduce the closed-loop system under consideration. Section \ref{section3} is devoted to the analysis of the well-posedness of the closed-loop system. In Section \ref{section4}, we establish the main input-to-state stability result. An application to the beam equation is presented in Section \ref{section5}. Section \ref{section6} summarizes the results and discusses future work.

\medskip
\noindent\textbf{Notation.} As usual, $\K$, $\K_{\infty}$, $\LL$ and $\mathcal{KL}$ denote the following classes of comparison functions:
\begin{align*}
    \K &:= \left\{ \gamma \in C(\R^{+}) : \gamma \text{ strictly increasing with } \gamma(0) = 0 \right\}, \\
    \K_{\infty} &:= \left\{ \gamma \in  \K : \gamma \text{ unbounded} \right\}, \\
    \LL &:= \left\{ \gamma \in C(\R^{+}) : \gamma \text{ strictly decreasing with } \lim_{r \to \infty} \gamma(r) = 0 \right\}, \\
    \mathcal{KL} &:= \left\{ \beta \in C(\R^{+} \times \R^{+}; \R^{+}) : \beta(\cdot,t) \in \K, ~\forall t\ge 0, ~\beta(r,\cdot) \in \LL, ~\forall r>0 \right\},
\end{align*}
where $\R^{+}:=[0,\infty)$ and $\N_{0}= \N \setminus \{0\}$.

\section{Problem formulation}\label{section2}
We consider port-Hamiltonian  systems of the following form \cite{Villegas:07}
\begin{equation}\label{syst:PHS}
\begin{aligned}
     \dot{x}(\zeta,t) &= \left( P_2 \frac{\partial^2}{\partial \zeta^2} + P_1 \frac{\partial}{\partial \zeta} + P_0 \right) \mathcal{H}(\zeta) x(\zeta,t), \quad t >0, \quad \zeta \in [0,1] \\
     u(t) &= W_{B,1} \tau(\HH x)(t), \quad  
     0 = W_{B,2} \tau(\HH x)(t), \quad t>0 \\
    y(t) &= W_{C} \tau(\HH x)(t), \quad t>0  \\
    x(\zeta,0) &= x_0(\zeta), \quad \zeta \in (0,1)
\end{aligned}
\end{equation}
where $\tau : \mathrm H^2((0,1);\mathbb R^n) \to \mathbb R^{4n}$ is the trace operator given by
$$ \tau(x) = \begin{pmatrix}
    x(1) & x'(1) & x(0) & x'(0)
\end{pmatrix}^\top . $$
Here $t\ge 0$ denotes the time, $x(\cdot,t)\in \mathrm L^2((0,1),\mathbb R^n)$ is the state, $u(t)\in \mathbb R^m$ the input, $y(t)\in \mathbb R^m$ the output at time $t$. The derivative w.r.t.\ time $t$ is denoted by $\dot{x}$, while $x'$ refers to the derivative of $x$ w.r.t.\ space $\zeta$. 
We assume that $P_{0}, P_{1}, P_{2} \in \R^{n \times n}$ with $P_2$ skew-symmetric and invertible, $P_1$ symmetric and $\Re~ P_{0} \le 0$ .
Moreover, the Hamiltonian density function $\mathcal H \in \mathrm W^{1,\infty}((0,1); \R^{n\times n})$ is assumed to be  self-adjoint for all $\zeta \in [0,1]$ and there exist $m_{-}, m_{+} > 0$ such that for all $\zeta \in [0,1]$
$$ m_{-} I \le \mathcal H(\zeta) \le m_{+} I. $$ 
Note that $\mathcal H \in \mathrm W^{1,\infty}((0,1); \R^{n\times n})$ implies that the mapping  $\zeta \mapsto \mathcal H(\zeta)$ is absolutely continuous.
Furthermore, the matrices $W_{B,1}, W_C \in \R^{m \times 4n}$, where $0<m\le 2n$, and $W_{B,2}\in \R^{(2n-m) \times 4n}$ satisfy that $\left[\begin{smallmatrix} W_{B,1}\\ W_{B,2} \\ W_C \end{smallmatrix}\right]$ has full row rank.

As the state space of \eqref{syst:PHS}, we choose $X := \mathrm L^2((0,1), \R^{n})$ endowed with the norm $\|\cdot \|_{X}$ induced by the system's energy
\begin{equation*}
   \frac{1}{2} \| x\|_{X}^{2} := E(x) := \frac{1}{2} \int_{0}^{1} x(\zeta)^\top \HH(\zeta) x(\zeta) d\zeta .
\end{equation*}
We note that the norm $\|\cdot \|_{X}$ is equivalent to the standard norm of $\mathrm L^{2}((0,1), \R^{n})$ and it is induced by a scalar product which we denote by $\langle \cdot, \cdot \rangle_{X}$. We define the operators $ \mathcal{A}:  D(\mathcal{A})\subset X\rightarrow X$ and  $ \mathcal{B}, \mathcal{C}:  D(\mathcal{A})\subset X\rightarrow \mathbb R^{m}$ by 
\begin{align*}
    \mathcal{A} x &= \left( P_2 \frac{\partial^2}{\partial\zeta^2}  + P_1 \frac{\partial}{\partial \zeta} + P_0 \right) \mathcal H x, \\
    \BB x &= W_{B,1} \tau(\mathcal H x), \\
    \CC x &= W_{C}  \tau(\mathcal H x), \\
    D(\mathcal{A}) &= \left\{ x \in \mathrm L^2((0,1);\mathbb R^n) \, | \, \mathcal H x \in \mathrm H^2((0,1);\mathbb R^n), \, W_{B,2}  \tau(\mathcal H x)=0 \right\}. 
\end{align*}
Thus, system \eqref{syst:PHS} has the following compact representation
\begin{align*}
    \dot{x}(t) &= \mathcal A x(t),\qquad x(0)=x_0,\qquad 
    u(t)= \mathcal B x(t),\qquad
    y(t)= \mathcal C x(t), \quad t> 0.
\end{align*}
In the following, we always assume that the impedance passivity inequality
\begin{equation}\label{eqn:impedance_passive}
    \left\langle x, \A x \right\rangle_{X} \le \left(\BB x\right)^\top \CC x , \quad x \in D(\A) .
\end{equation}
holds.

If system \eqref{syst:PHS} satisfies  \eqref{eqn:impedance_passive}, then the system is called \emph{impedance passive}. If equality holds, 
then system \eqref{syst:PHS} is called \emph{impedance energy-preserving}.

\begin{remark}
The impedance passivity inequality \eqref{eqn:impedance_passive} implies that the operator $A:= \mathcal A_{|D(\A) \cap \ker \BB}$ generates a contraction semigroup \cite[Theorem 4.1]{GorZwaMas:2005} and that the port-Hamiltonian system \eqref{syst:PHS} is a boundary control system. Furthermore, for $u \in C^{2}([0,\infty);\R^{m})$ with $x \in D(\A)$ and $u(0)=\BB x_{0}$, the system \eqref{syst:PHS} possesses a unique classical solution.
If in addition, $m=2n$ and $\Re P_{0}=0$, then the port-Hamiltonian system \eqref{syst:PHS} satisfies the balance equation \cite[Theorem 6.24]{Villegas:07}
\begin{equation*}
    \left\langle \A x, x\right\rangle = \frac{1}{2} \left\langle \begin{pmatrix}
        \BB x \\ 
        \CC x 
    \end{pmatrix} , P_{W_B, W_C} \begin{pmatrix}
        \BB x \\ 
        \CC x 
    \end{pmatrix} \right\rangle .
\end{equation*}
Thus, the inequality \eqref{eqn:impedance_passive} is equivalent to 
\begin{equation*}
P_{W_B, W_C} =
\begin{bmatrix}
    \tilde W_{B} \Sigma \tilde W_{B}^\top & \tilde W_{C} \Sigma \tilde W_{B}^\top \\
    \tilde W_{C} \Sigma \tilde W_{B}^\top & \tilde W_{C} \Sigma \tilde W_{C}^\top
\end{bmatrix}^{-1}
\le \begin{bmatrix}
    0 & I \\
    I & 0
\end{bmatrix} ,
\end{equation*}
where 
\begin{equation*}
     \tilde W_B = \frac{1}{\sqrt{2}} \left[\begin{matrix}
         W_{B,1} \\ W_{B,2}
     \end{matrix}\right] \left[\begin{matrix}
           R & I \\
          -R & I
     \end{matrix} \right], \qquad \tilde W_C= \frac{1}{\sqrt{2}} W_C \left[\begin{matrix}
           R & I \\
          -R & I
     \end{matrix} \right] , \qquad
     \Sigma =\left[\begin{matrix}  0 & I \\ I & 0 \end{matrix}\right], \qquad 
        R= \left[\begin{matrix}
         0 & -P_2^{-1} \\
         P_2^{-1} & P_2^{-1} P_1 P_2^{-1}
     \end{matrix} \right] .
\end{equation*}
\end{remark}

In order to  stabilize the port-Hamiltonian system \eqref{syst:PHS}, we choose a finite-dimensional nonlinear controller given by 
\begin{equation}\label{syst:controller}
    \begin{aligned}
        \dot{v}(t) &= \begin{pmatrix}
            \dot v_{1}(t) \\
            \dot v_{2}(t)
        \end{pmatrix} = \begin{pmatrix}
            K v_{2}(t) \\
            -\nabla \PP(v_{1}(t)) - \RR(K v_{2}(t)) + B_{c} u_{c}(t)
        \end{pmatrix} , \quad t>0, \qquad v(0)=v_0,\\
        y_{c}(t) &= B_{c}^\top K v_{2}(t) + S_{c} u_{c}(t) , \quad t>0,
    \end{aligned}
\end{equation}
where the matrix $B_{c} \in \R^{n_{c}\times m}$ is given and the matrices $K \in \R^{n_c \times n_c}$, $S_{c} \in \R^{m \times m}$ are given and positive definite. Therefore, we have 
\begin{equation}\label{eqn:S_c} 
    S_{c} \ge  \delta I >0, \quad \text{where } \delta:= \min \sigma(S_{c}). 
\end{equation}
The controller state space is $V := \R^{2 n_c}$ and we choose the norm $|\cdot|_{V}$ defined by
\begin{equation*}
    |v|_{V}^{2} = \left |\begin{pmatrix}
        v_{1} \\ v_{2}
    \end{pmatrix}\right |^2 = |v_{1}|^2 + v_{2}^\top K v_{2} .
\end{equation*}
\begin{remark}
 The energy associated with this system is given by
\begin{equation}\label{eqn:energy_c}
    E_{c}(v) := \PP(v_{1}) + \frac{1}{2} v_{2}^\top K v_{2} .
\end{equation}
\end{remark}
Furthermore, we make the following assumptions on the controller \eqref{syst:controller} throughout this paper:
\begin{enumerate}
    \item The potential energy $\PP : \R^{n_{c}} \to [0, \infty)$ is differentiable such that $\nabla \PP$ is locally Lipschitz continuous with $\PP(0) =0$ and  $\PP$ is positive definite. Furthermore, $\PP(v_1) \to \infty$ as $|v_1| \to \infty$.
    \item The damping $\RR : \R^{n_c} \to \R^{n_c}$ is locally Lipschitz continuous with $\RR(0) = 0$ and nonnegative, that is, $v_2^\top \RR (v_2) \ge 0$ for all $v_2 \in \R^{n_c}$.
\end{enumerate}

These assumptions  imply, in particular, that system \eqref{syst:controller} possesses a local solution for every initial condition. Further,
the second item  combined with \eqref{eqn:S_c} implies that the controller system \eqref{syst:controller} is strictly input-passive w.r.t. the storage function $E_{c}$, that is, along solutions  
    $$ \dot{E}_{c}(v) \le u_{c}^\top y_{c} - \delta |u_{c}|^{2} . $$
Thus, the solutions cannot blow up in finite time and therefore exist globally. 
Further,  system \eqref{syst:controller} is a port-Hamiltonian system. 

The finite-dimensional passive system \eqref{syst:controller} can be interpreted from two complementary viewpoints. It may represent an intrinsic part of the physical system, such as a tip mass, a mass-spring-damper system or an attached oscillator. Alternatively, it can be regarded as an external control device designed to modify the behavior of the distributed system, for instance as a vibration absorber, a passive controller, or an energy-dissipating actuator. 

\subsection{The closed-loop system}
We consider the interconnection between the infinite-dimensional port-Hamiltonian system \eqref{syst:PHS} and the finite-dimensional nonlinear port-Hamiltonian system \eqref{syst:controller} given by
\begin{equation*}
   u_{c}(t) = y(t) + d_{c}(t) \quad  \text{and} \quad u(t) = - y_c(t) + d(t),\qquad t>0,
\end{equation*}
\begin{wrapfigure}{r}{0.4\textwidth}
    \centering
    \begin{tikzpicture}[scale=1.4]
    \node[draw,minimum width=1.8cm, minimum height=1.3cm] (A) {\shortstack{Port-Hamiltonian \\\\ system}};
    \node[draw,minimum width=1.8cm, minimum height=1.3cm, below=0.5cm of A] (B) {Controller};
    
    \draw[->,>=latex] ([xshift=-0.8cm,yshift=0.08cm]A.west) -- ([yshift=0.08cm]A.west);
    \draw ([xshift=-0.5cm,yshift=0.08cm]A.west) node[above]{$u$};
    \fill ([xshift=-0.8cm,yshift=0.08cm]A.west) circle (0.7pt);
    
    \draw[->,>=latex] ([xshift=-1.3cm,yshift=0.08cm]A.west) -- ([xshift=-0.8cm,yshift=0.08cm]A.west);
    \draw ([xshift=-1.1cm,yshift=0.1cm]A.west) node[above]{$d$};
    
    \draw ([xshift=-0.78cm,yshift=-0.04cm]A.west) node[left] {$-$};
    
    \draw[->,>=latex] ([yshift=0.08cm]A.east) --([xshift=0.91cm,yshift=0.08cm]A.east);
    \draw ([xshift=0.5cm,yshift=0.09cm]A.east) node[above] {$y$};
    \draw[<-,>=latex] ([xshift=-0.8cm,yshift=0.08cm]A.west) |-([yshift=0cm]B.west);
    \draw ([xshift=-0.5cm,yshift=0.03cm]B.west) node[above] {$y_{c}$};
    
    \draw[->,>=latex] ([xshift=0.9cm,yshift=0.08cm]A.east) |-([xshift=0cm,yshift=0cm]B.east);
    
    \draw ([xshift=0.5cm,yshift=0.03cm]B.east) node[above] {$u_{c}$};
    
    \fill ([xshift=1.22cm,yshift=0.003cm]B.east) circle (0.7pt);
    
    \draw[<-,>=latex] ([xshift=1.22cm,yshift=0cm]B.east) -- ([xshift=1.72cm,yshift=0cm]B.east);

    \draw ([xshift=1.5cm,yshift=0.03cm]B.east) node[above]{$d_{c}$};
\end{tikzpicture}
    \caption{The closed-loop system.}
    \label{fig:system}
\end{wrapfigure}
that is, we interconnect these systems in an antisymmetric way which guarantees that the interconnected system is a port-Hamiltonian system as well. The state space for the closed-loop system is the Hilbert space $\tilde X := X \times V = X \times \R^{2n_{c}}$ with norm defined by
$$ \|\tilde x\|^{2} = \|(x,v)\|^{2} := \|x\|^{2}_{X} + |v|^{2}_{V} . $$
We define the linear operators $\tilde \A: D(\tilde \A) \to \tilde X$ and $\tilde \BB, \tilde \CC : D(\tilde \A) \to \R^{m}$, the nonlinear operator $\tilde f: \tilde X \to \tilde X$, with $D(\tilde \A)=D(\A) \times \R^{2n_{c}}$ and  $\tilde B_{c}: \R^{n_{c}} \to \tilde X$ as  
\begin{align*}
    \tilde \A \tilde x &= \begin{pmatrix}
        \A x \\
        K v_{2} \\
        -v_{1} + B_{c} \CC x
    \end{pmatrix}, \qquad \tilde f(\tilde x) = \begin{pmatrix}
        0 \\
        0 \\
        v_{1} - \nabla \PP(v_{1}) - \RR(K v_{2})
    \end{pmatrix}, \\
    \tilde \BB \tilde x &:= \BB x + B_{c}^\top K v_{2} + S_{c} \CC x, \qquad \tilde B_{c} v_{2} = \begin{pmatrix}
        0 \\
        0 \\
        v_{2}
    \end{pmatrix}, \\ 
    \tilde \CC \tilde x &:= \CC x .
\end{align*}
Using the above definitions, the closed-loop system with inputs $d$ and $d_c$, and output $y$ can be written as a semilinear boundary control system, as in \cite{Mironchenko:24}
\begin{equation}\label{syst:closed-loop}
\begin{aligned}
    \dot{\tilde{x}}(t) &= \tilde \A \tilde x(t) + \tilde f(\tilde x(t)) + \tilde B_{c} B_{c} d_{c}(t), \qquad t>0, \quad \tilde x(0)=\tilde x_0,\\
    \tilde \BB \tilde x(t) &= d(t) - S_{c} d_{c}(t), \qquad t>0, \\
    \tilde \CC \tilde x(t) &= y(t), \qquad t>0,
\end{aligned}
\end{equation}

\begin{remark}
The energy of the closed-loop system in the state $\tilde x=(x,v) \in \tilde X$ is
$$ \tilde E(\tilde x) := E(x) + E_{c}(v) = \frac{1}{2} \|x\|^{2}_{X} + \PP(v_{1}) + \frac{1}{2} v_{2}^\top K v_{2}. $$
By virtue of Lemma 2.5 in \cite{Clarke:98}, there exist $\phi_{1}$, $\phi_{2} \in \K_{\infty}$ such that for all $\tilde x \in \tilde X$
\begin{equation}\label{eqn:energy_equivalence}
    \phi_{1}(\|\tilde x\|) \le \tilde E(\tilde x) \le \phi_{2}(\| \tilde x\|). 
\end{equation}
\end{remark}

\section{Well-posedness of the closed-loop system}\label{section3}
In this section, we investigate the well-posedness of the closed-loop system \eqref{syst:closed-loop}. More precisely, we establish the existence and uniqueness of solutions for sufficiently regular initial states $\tilde x_0$ and disturbances $d, d_{c}$. We first establish local classical solvability for compatible initial states and sufficiently regular disturbances. An energy estimate then yields global existence. Finally, a continuous-dependence estimate extends the solution map to arbitrary initial states in $\tilde X$ and disturbances in $\mathrm L_{\mathrm{loc}}^{2}$.

\begin{defn}\cite[Definition 4.2]{TuscnakWeiss:14}
A triple ($\tilde x, (d, d_{c}),y)$ is called a \emph{classical solution} of the semilinear boundary control system \eqref{syst:closed-loop} on $[0,t_{0})$ if $d, d_{c} \in C^{2}([0,t_{0});\R^{m})$, $y \in C([0,t_{0}); \R^{m})$, $ \tilde x \in C([0,t_{0});D(\tilde \A)) \cap C^{1}([0,t_{0}); \tilde X) $ 
and equations \eqref{syst:closed-loop} are satisfied for every $t \in (0,t_{0})$. \\
The triple $(\tilde x, (d, d_{c}),y)$ is called a \emph{generalized solution} of \eqref{syst:closed-loop} on $[0,t_{0})$ if $\tilde x \in C([0,t_{0}); \tilde X)$, $d,d_{c},y \in \mathrm L^{2}_{\mathrm{loc}}((0,t_{0});\R^{m}) $ and there exists a sequence $(\tilde x_{n}, (d_{n}, d_{c,n}), y_{n})$ of classical solutions to \eqref{syst:closed-loop} such that  
    \begin{align*}
        \tilde x_{n} &\to \tilde x \text{ in } C([0,t_{0});\tilde X)\\
        d_{n} &\to d \text{ in } \mathrm L^{2}_{\mathrm{loc}}([0,t_{0});\R^{m}), \quad  d_{c,n} \to d_c \text{ in } \mathrm L^{2}_{\mathrm{loc}}([0,t_{0});\R^{m}), \\
        y_{n} &\to y \text{ in } \mathrm L^{2}_{\mathrm{loc}}([0,t_{0});\R^{m}) .
    \end{align*}
\end{defn}

The following result follows from Lemma 2.3 in \cite{SchmidZwart:21}. Although our formulation includes an additional disturbance $d_{c}$, the proof follows analogously since the impedance passivity inequality, which is the key argument in the proof, still holds. Therefore, we have the following lemma.
\begin{lemma}
 The operator $\tilde A := \tilde \A_{|D(\tilde \A)\cap \ker \tilde \BB}$ is the infinitesimal generator of a contraction semigroup on $\tilde X$.
\end{lemma}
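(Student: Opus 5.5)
We will apply the Lumer--Phillips theorem on the Hilbert space $\tilde X$. The task splits into showing that $\tilde A$ is dissipative and that $\lambda I-\tilde A$ is surjective for some $\lambda>0$. Density of $D(\tilde A)$ then follows automatically, since a maximal dissipative operator on a Hilbert space is densely defined.

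\textbf{Dissipativity.} Let $\tilde x=(x,v_1,v_2)\in D(\tilde A)$, so that $\BB x=-B_c^\top Kv_2-S_c\CC x$. We use the inner product of $\tilde X$, namely $\langle\tilde x,\tilde z\rangle=\langle x,z\rangle_X+v_1^\top w_1+v_2^\top K w_2$. Then
\[
\langle \tilde A\tilde x,\tilde x\rangle=\langle \A x,x\rangle_X+v_1^\top Kv_2+v_2^\top K(-v_1+B_c\CC x).
\]
Since $K$ is symmetric, the two $v_1$-terms cancel. Applying the impedance passivity inequality \eqref{eqn:impedance_passive} gives
\[
\langle \tilde A\tilde x,\tilde x\rangle\le (\BB x)^\top\CC x+(Kv_2)^\top B_c\CC x=-(\CC x)^\top S_c\CC x\le-\delta|\CC x|^2\le 0 .
\]
The $B_c$-terms cancel because of the antisymmetric interconnection. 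This step is routine.

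\textbf{Range condition.} Fix $\lambda>0$ and $(f,g_1,g_2)\in\tilde X$. We look for $(x,v_1,v_2)$ with $\lambda x-\A x=f$, $\lambda v_1-Kv_2=g_1$, $\lambda v_2+v_1-B_c\CC x=g_2$, and $\BB x+B_c^\top Kv_2+S_c\CC x=0$.

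First we eliminate the controller variables. From the second equation, $v_1=\lambda^{-1}(g_1+Kv_2)$. Substituting into the third gives $(\lambda I+\lambda^{-1}K)v_2=g_2-\lambda^{-1}g_1+B_c\CC x$. The matrix $M_\lambda:=\lambda I+\lambda^{-1}K$ is positive definite, so
\[
v_2=M_\lambda^{-1}\bigl(g_2-\lambda^{-1}g_1+B_c\CC x\bigr).
\]
The boundary condition then takes the form
\[
\BB x+\bigl(S_c+B_c^\top KM_\lambda^{-1}B_c\bigr)\CC x=h,
\]
where $h\in\R^m$ is determined by the data. The matrix $Q_\lambda:=S_c+B_c^\top KM_\lambda^{-1}B_c$ is positive definite. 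To see this, note that $KM_\lambda^{-1}=K(\lambda I+\lambda^{-1}K)^{-1}$ is symmetric positive semidefinite, since $K$ and $M_\lambda$ commute.

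It therefore remains to solve $\lambda x-\A x=f$ subject to $\BB x+Q_\lambda\CC x=h$ and $x\in D(\A)$. We reduce to the homogeneous case $h=0$ by subtracting a lifting $x_h\in D(\A)$ with $\BB x_h+Q_\lambda\CC x_h=h$. Such a lifting exists because $\left[\begin{smallmatrix} W_{B,1}\\ W_{B,2} \\ W_C \end{smallmatrix}\right]$ has full row rank, so the boundary map $x\mapsto(\BB x+Q_\lambda\CC x,\;W_{B,2}\tau(\HH x))$ is surjective from $\HH^{-1}\mathrm H^2$.

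Now set $A_Q:=\A$ on $D(\A)\cap\ker(\BB+Q_\lambda\CC)$. By \eqref{eqn:impedance_passive}, for $x$ in this domain,
\[
\langle \A x,x\rangle\le-(\CC x)^\top Q_\lambda\CC x\le0,
\]
so $A_Q$ is dissipative. The claim that it generates a contraction semigroup is an instance of the standard result \cite[Theorem 4.1]{GorZwaMas:2005}, extended to second-order systems in \cite{Villegas:07}. That result says that an impedance passive port-Hamiltonian system closed by output feedback $u=-Q\CC x$ with $Q\ge0$ yields a contraction semigroup generator. Hence $\lambda\in\rho(A_Q)$, and we obtain $x$, then $v_2$ and $v_1$.

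\textbf{Main obstacle.} The step I expect to be hardest is justifying maximality of $A_Q$, equivalently surjectivity of $\lambda-A_Q$. If it cannot be cited directly, I would proceed through the resolvent of $A$. Write $x=(\lambda-A)^{-1}f+\mathcal D_\lambda u$, where $\mathcal D_\lambda$ is the Dirichlet-type map solving $\lambda z=\A z$ with $\BB z=u$. The boundary condition then becomes the finite-dimensional system $(I+Q_\lambda G(\lambda))u=\text{data}$, where $G(\lambda)=\CC\mathcal D_\lambda$ is the transfer function. Impedance passivity gives $\Re G(\lambda)\ge0$, so $I+Q_\lambda G(\lambda)$ is invertible: if $u=-Q_\lambda G u$, pairing with $Gu$ gives $0\le\Re\langle Gu,u\rangle=-\langle Q_\lambda Gu,Gu\rangle\le0$. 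Hence $Gu=0$, and then $u=0$.

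This argument needs the transfer function $G(\lambda)$ to be well defined. That holds because $A$ generates a contraction semigroup, by the remark above.
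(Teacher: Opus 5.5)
Your proof is correct. It takes a more explicit route than the paper, which gives no argument of its own: it simply invokes Lemma 2.3 of \cite{SchmidZwart:21} and notes that impedance passivity, the key ingredient there, still holds.

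Your Lumer--Phillips argument supplies the details directly. Dissipativity follows from \eqref{eqn:impedance_passive}, the symmetry of $K$, the antisymmetric interconnection, and $S_c\ge\delta I$. For the range condition, eliminating $v_1,v_2$ from the resolvent equation reduces it to the static boundary feedback $\BB x+Q_\lambda\CC x=h$. Here $Q_\lambda=S_c+B_c^\top KM_\lambda^{-1}B_c$ is positive definite, and the full-rank assumption provides the lifting.

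This buys transparency. One sees exactly where positivity of $K$ and $S_c$ and the rank condition enter. One also sees that the additional disturbance $d_c$ mentioned in the paper plays no role in this lemma, since neither $\tilde\A$ nor $\tilde\BB$ involves it.

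Of your two ways of closing the range argument, the transfer-function one is preferable. It uses only the generation property of $A=\A|_{D(\A)\cap\ker\BB}$ that the paper already asserts in its remark. Beyond that it needs only the finite-dimensional injectivity of $I+Q_\lambda G(\lambda)$, which follows from $u^\top G(\lambda)u\ge 0$. So no fresh appeal to a generation theorem for the modified boundary matrix $\left[\begin{smallmatrix}W_{B,1}+Q_\lambda W_C\\ W_{B,2}\end{smallmatrix}\right]$ is needed.
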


Given that $\begin{bsmallmatrix}
    W_{B,1} \\
    W_{B,2} \\
    W_{C}
\end{bsmallmatrix}$ has full row rank, the operator $\tilde \BB$ is surjective. It follows then that there exists a linear map $R: \R^{m} \to D(\tilde \A)$ such that $\tilde \BB R = I_{m}$. Using the classical transformation (see \cite{Fattorini:68})
\begin{equation*}
    z(t) = \tilde x(t) - R d(t) + R S_{c} d_{c}(t) ,\qquad t\ge 0, 
\end{equation*}
we can rewrite the closed-loop system as the following semilinear evolution equation (with a  time-dependent nonlinearity) 
\begin{equation}\label{eqn:semilinear_eqn}
\begin{aligned}
    \dot{z}(t) &= \tilde A z(t) + \tilde f(z(t) + R d(t) - R S_{c} d_{c}(t)) + \left( \tilde B_{c} B_{c} - \tilde \A R S_{c}\right) d_{c}(t) + R S_{c} \dot{d}_{c}(t) + \tilde \A R d(t) - R \dot{d}(t), \, t>0, \\
    y(t) &= \tilde \CC \left(z(t) + R d(t) -R S_{c} d_{c}(t)\right), \quad t>0,
\end{aligned}
\end{equation}
with initial condition $z(0)=z_0$.
Since $\tilde A$ is the infinitesimal generator of a $C_{0}$-semigroup and $\tilde f$ is locally Lipschitz continuous, 
we see that for $z_{0} \in \tilde X$ and $d, d_{c} \in \mathrm W_{\mathrm{loc}}^{1,2}([0,\infty);\R^{m})$ the semilinear evolution equation \eqref{eqn:semilinear_eqn} has on some interval $[0,t_{max})$ a unique mild solution \cite[Chapter 6, Theorem 1.4]{pazy2012semigroups}, that is a continuous function that satisfies the integral form of the differential equation \eqref{eqn:semilinear_eqn}. Furthermore, if $z_{0} \in D(\tilde A)$, $d, d_{c} \in C^{2}([0,\infty);\R^{m})$ then the mild solution is a classical solution \cite[Theorem 11.1.3]{Vorabie:03}.

We define the compatibility-regularity set $\D$ for classical solutions as
\begin{align*}
    \D &:= \left\{ (\tilde x_{0},d, d_{c}) \in D(\tilde \A) \times C^{2}([0,\infty);\R^{m})^{2}   
   \text{ with } \tilde \BB \tilde x_{0} = d(0) - S_{c} d_{c}(0) \right\} .
\end{align*}
This leads to the following well-posedness result of the closed-loop system \eqref{syst:closed-loop}.

\begin{prop}\label{prop:mild_solution}
 For every  $(\tilde x_{0},d,d_{c}) \in \D$,
there exists a positive time $t_{max} \le \infty$ such that the closed-loop system \eqref{syst:closed-loop} has a unique classical solution $(\tilde x, (d, d_{c}),y)$ on $[0,t_{max})$ with $\tilde x(0)=\tilde x_0$.
\end{prop}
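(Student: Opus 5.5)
We reduce the statement to the semilinear evolution equation \eqref{eqn:semilinear_eqn} via the Fattorini transformation $z(t)=\tilde x(t)-Rd(t)+RS_c d_c(t)$, and then transport the classical solution back to $\tilde x$.

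First we check that the initial datum lies in the domain. For $(\tilde x_0,d,d_c)\in\D$ set $z_0:=\tilde x_0-Rd(0)+RS_cd_c(0)$. Since $R$ maps into $D(\tilde\A)$ and $\tilde x_0\in D(\tilde\A)$, we get $z_0\in D(\tilde\A)$. Moreover $\tilde\BB R=I_m$ and the compatibility condition give
\[
\tilde\BB z_0=\tilde\BB\tilde x_0-d(0)+S_cd_c(0)=0 .
\]
Hence $z_0\in D(\tilde A)$, where $\tilde A$ is the contraction semigroup generator from the preceding lemma.

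Next we verify the hypotheses for the evolution equation. The forcing term
\[
g(t):=(\tilde B_cB_c-\tilde\A RS_c)d_c(t)+RS_c\dot d_c(t)+\tilde\A Rd(t)-R\dot d(t)
\]
is a finite-dimensional linear image of $d,\dot d,d_c,\dot d_c$. Since $d,d_c\in C^2$, it follows that $g\in C^1([0,\infty);\tilde X)$. The map $\tilde f$ only acts on the $v$-component through $\nabla\PP$ and $\RR$, which are locally Lipschitz by assumption. Therefore $(t,z)\mapsto \tilde f(z+Rd(t)-RS_cd_c(t))$ is continuous, locally Lipschitz in $z$ uniformly on bounded time intervals, and $C^1$ in $t$ composed with a locally Lipschitz map. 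By \cite[Chapter 6, Theorem 1.4]{pazy2012semigroups} there is a maximal $t_{max}\le\infty$ and a unique mild solution $z\in C([0,t_{max});\tilde X)$.

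We expect the upgrade to a classical solution to be the main obstacle, because $\tilde f$ is only locally Lipschitz and not $C^1$. The result quoted in the text, \cite[Theorem 11.1.3]{Vorabie:03}, gives classicality for $z_0\in D(\tilde A)$ and $d,d_c\in C^2$. If one wants to argue it directly, we would use the Pazy-type regularity argument (Chapter 6, Theorem 1.6): on a compact subinterval the nonlinearity $t\mapsto \tilde f(z(t)+\dots)$ is Lipschitz in $t$, since $z$ is Lipschitz there (shown via a Gronwall estimate on difference quotients). Together with $z_0\in D(\tilde A)$ in a Hilbert space, this yields $z\in C^1([0,t_{max});\tilde X)\cap C([0,t_{max});D(\tilde A))$. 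Finally $\tilde f$ maps into the finite-dimensional $V$-part, which makes this Lipschitz-in-time step elementary.

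It remains to define $\tilde x:=z+Rd-RS_cd_c$ and check the three defining conditions of a classical solution.
\begin{itemize}
\item Regularity: $\tilde x\in C([0,t_{max});D(\tilde\A))\cap C^1([0,t_{max});\tilde X)$, because $R$ is bounded into $D(\tilde\A)$ with the graph norm and $d,d_c\in C^2$.
\item Differential equation: substituting into \eqref{eqn:semilinear_eqn} and using $\tilde\A z=\tilde Az$ recovers $\dot{\tilde x}=\tilde\A\tilde x+\tilde f(\tilde x)+\tilde B_cB_cd_c$.
\item Boundary relation: $\tilde\BB\tilde x=0+d-S_cd_c$.
\item Output: we set $y:=\tilde\CC\tilde x$, which is continuous.
\end{itemize}
Uniqueness follows by reversing the transformation. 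Any classical solution $\tilde x$ yields a classical, hence mild, solution $z$ of \eqref{eqn:semilinear_eqn}, and mild solutions are unique by the local Lipschitz property.
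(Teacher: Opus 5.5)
Your proposal is correct and follows essentially the paper's argument: the Fattorini shift $z=\tilde x-Rd+RS_c d_c$ to the semilinear equation \eqref{eqn:semilinear_eqn}, local mild existence via Pazy's Chapter 6, Theorem 1.4, and the upgrade to a classical solution for $z_0\in D(\tilde A)$ and $C^2$ disturbances via the cited result of Vrabie, followed by transforming back. You also spell out steps the paper leaves implicit: that the compatibility condition in $\D$ is exactly what places $z_0$ in $D(\tilde A)$, the $C^1$ regularity of the forcing, the return transformation, and uniqueness. Your optional direct upgrade is sound as well, because a Lipschitz forcing in a Hilbert space is $W^{1,\infty}$ in time, so the linear theory gives a classical solution and not merely the strong solution that Pazy's Theorem 6.1.6 alone provides.
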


Next, we show that the closed-loop system \eqref{syst:closed-loop} has global classical and   generalized solutions. 
\begin{prop}\label{prop:prop3.4}
   For every $\tilde x_{0} \in \tilde X$ and $d, d_{c} \in \mathrm L^{2}_{\mathrm{loc}}([0,\infty);\R^{m})$ the closed-loop system \eqref{syst:closed-loop} has a unique generalized solution $(\tilde x_{ext}, (d, d_{c}),y)$ on $[0,\infty)$ with $\tilde x_{ext}(0)=\tilde x_0$. 
    In addition, if $(\tilde x_{0},d,d_{c}) \in \D$, then $\tilde x:=\tilde x_{ext}$ is a classical solution on $[0,\infty)$ and 
\begin{align}
    \dot{\tilde{E}}(\tilde x(s)) 
    &\le \left(|d(s)| + |d_{c}(s)| \|S_c\|\right) |y(s)| -(K v_{2}(s))^\top \RR(K v_{2}(s)) + \|B_{c}\| |K v_{2}(s)| |d_{c}(s)| - \delta |y(s)|^{2}.\label{eqn:derive_energy}
\end{align}
\end{prop}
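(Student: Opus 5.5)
The plan has three stages: first derive the energy inequality \eqref{eqn:derive_energy} for classical solutions on their maximal interval, then use it to rule out finite-time blow-up, and finally extend the solution map to rough data by density and a continuous-dependence estimate.

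\textbf{Step 1: energy inequality.} Let $(\tilde x_0,d,d_c)\in\D$ and let $(\tilde x,(d,d_c),y)$ be the classical solution on $[0,t_{max})$ from Proposition \ref{prop:mild_solution}. Since $\tilde x\in C^1([0,t_{max});\tilde X)$ and $\PP$ is differentiable, we may differentiate $\tilde E(\tilde x(s))$ and obtain
\begin{equation*}
\dot{\tilde E} = \langle x,\A x\rangle_X + \nabla\PP(v_1)^\top K v_2 + (Kv_2)^\top\bigl(-\nabla\PP(v_1)-\RR(Kv_2)+B_c u_c\bigr).
\end{equation*}
The $\nabla \PP$ terms cancel. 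Impedance passivity \eqref{eqn:impedance_passive} gives $\langle x,\A x\rangle_X\le u^\top y$. Insert $u=-y_c+d = -B_c^\top K v_2 - S_c(y+d_c)+d$ and $u_c=y+d_c$. The cross terms $-(B_c^\top K v_2)^\top y$ and $(Kv_2)^\top B_c y$ cancel, which is the point of the antisymmetric interconnection. What remains is
\begin{equation*}
d^\top y - y^\top S_c y - d_c^\top S_c^\top y - (Kv_2)^\top\RR(Kv_2) + (Kv_2)^\top B_c d_c .
\end{equation*}
Applying $y^\top S_c y\ge\delta|y|^2$ from \eqref{eqn:S_c} and Cauchy--Schwarz to the remaining terms yields \eqref{eqn:derive_energy} on $[0,t_{max})$.

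\textbf{Step 2: global existence.} The right-hand side of \eqref{eqn:derive_energy} must be controlled without $y$ and $v_2$. Using $ab\le \frac{\delta}{2}a^2+\frac{1}{2\delta}b^2$, absorb the $y$ terms into $-\delta|y|^2$. Drop the damping term, since it is $\le 0$. Bound $|Kv_2||d_c|\le \|K\|^{1/2}|v|_V|d_c|\le C(1+\tilde E)|d_c|$, using $|v_2|_K^2\le 2\tilde E$. This gives
\begin{equation*}
\dot{\tilde E}\le C(|d|^2+|d_c|^2) + C|d_c|(1+\tilde E),
\end{equation*}
so Gronwall bounds $\tilde E(\tilde x(t))$ on bounded intervals. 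By \eqref{eqn:energy_equivalence}, $\|\tilde x(t)\|$ stays bounded on each $[0,T]$ with $T \le t_{max}$. Through the transformation to \eqref{eqn:semilinear_eqn}, $\|z(t)\|$ also stays bounded, and the standard blow-up alternative for mild solutions of semilinear equations with locally Lipschitz nonlinearity \cite{pazy2012semigroups} forces $t_{max}=\infty$.

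\textbf{Step 3: generalized solutions.} This is the main obstacle. For general $\tilde x_0\in\tilde X$ and $d,d_c\in \mathrm L^2_{loc}$, approximate by data $(\tilde x_{0,n},d_n,d_{c,n})\in\D$. This is possible because $D(\tilde\A)$ is dense and the compatibility condition can be arranged by modifying $\tilde x_{0,n}$ with the lifting $R$ and adjusting the $C^2$ inputs near $t=0$. The task is then to show the corresponding classical solutions form a Cauchy sequence in $C([0,T];\tilde X)$, with outputs Cauchy in $\mathrm L^2(0,T)$.

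For this I would differentiate $\frac12\|\tilde x_n-\tilde x_k\|^2$. The linear part is again handled by passivity, with the $-\delta|y_n-y_k|^2$ term controlling the output difference. The difficulty is that $\tilde f$ is only locally Lipschitz, and $\RR$ is merely locally Lipschitz, not monotone. This is overcome by the uniform bound from Step 2: on $[0,T]$ the energy bound depends only on $\|\tilde x_{0,n}\|$ and the $\mathrm L^2$ norms of the inputs, which are uniformly bounded. All trajectories therefore stay in a fixed ball, on which $\tilde f$ is Lipschitz with a common constant $L$. Gronwall then gives
\begin{equation*}
\|\tilde x_n(t)-\tilde x_k(t)\|^2+\delta\int_0^t|y_n-y_k|^2 \le C_T\Bigl(\|\tilde x_{0,n}-\tilde x_{0,k}\|^2+\|d_n-d_k\|^2_{\mathrm L^2}+\|d_{c,n}-d_{c,k}\|^2_{\mathrm L^2}\Bigr).
\end{equation*}
The limit $\tilde x_{ext}$ is a generalized solution, and the same estimate shows it is independent of the approximating sequence, which gives uniqueness. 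When the data already lie in $\D$, the constant sequence shows that $\tilde x_{ext}$ coincides with the classical solution.
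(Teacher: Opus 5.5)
Your proposal is correct and follows the paper's route: the same energy computation via impedance passivity and the cancellation coming from the antisymmetric interconnection, Gr\"onwall plus the blow-up alternative to get $t_{max}=\infty$, and density of $\D$ for generalized solutions. The one difference is that you prove explicitly the Lipschitz-on-bounded-sets difference estimate (Cauchy property, $\mathrm L^2$ convergence of outputs, uniqueness) that the paper delegates to \cite[Theorem 3.4]{SchmidZwart:21}; one small slip in Step 2 is that you should bound $|Kv_2|\le\|K\|^{1/2}(v_2^\top K v_2)^{1/2}\le\|K\|^{1/2}\sqrt{2\tilde E}$ directly rather than through $|v|_V$, since under the standing assumptions alone (without Assumption \ref{Assumption4}) the $|v_1|$-part of $|v|_V$ need not be bounded by $C(1+\tilde E)$.
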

\begin{proof}
We first assume that $(\tilde x_{0}, d, d_{c}) \in \D$. By Proposition \ref{prop:mild_solution} there exists a positive time $t_{max} \le \infty$ such that the closed-loop system \eqref{syst:closed-loop} has a unique classical solution $(\tilde x, (d, d_{c}),y)$ on $[0,t_{max})$ with $\tilde x(0)=\tilde x_0$. Then the function 
\begin{equation*}
    t \mapsto \tilde E(\tilde x(t)) =  \frac{1}{2} \|x(t)\|^{2}_{X} + \PP(v_{1}(t)) + \frac{1}{2} v_{2}(t)^\top K v_{2}(t)
\end{equation*}
is continuously differentiable and we calculate 
\begin{align}
    \dot{\tilde{E}}(\tilde x(s)) 
    &= \left\langle \A x(s),x(s) \right\rangle_{X} + \dot{v}_{1}(s)^\top \nabla \PP(v_{1}(s)) + v_{2}(s)^\top K \dot{v}_{2}(s)\nonumber \\
    &= \left\langle \A x(s),x(s) \right\rangle_{X} + (K v_{2}(s))^\top \left( -\dot{v}_{2}(s) - \RR(K v_{2}(s)) + B_{c} u_{c}(s) \right) + v_{2}(s)^\top K \dot{v}_{2}(s)\nonumber  \\
    &= \left\langle \A x(s),x(s) \right\rangle_{X} - (K v_{2}(s))^\top \RR(K v_{2}(s)) + (B_{c}^\top K v_{2}(s))^\top u_{c}(s) \nonumber \\
    &= \left\langle \A x(s),x(s) \right\rangle_{X} - (K v_{2}(s))^\top \RR(K v_{2}(s)) + (d(s)- \BB x(s))^\top (y(s) + d_{c}(s)) \nonumber\\
    &\qquad \quad - (S_{c} y(s) + S_{c} d_{c}(s))^\top (y(s) + d_{c}(s)) \nonumber\\
    &= \left\langle \A x(s),x(s) \right\rangle_{X} - (K v_{2}(s))^\top \RR(K v_{2}(s)) + d(s)^\top y(s) - (\BB x(s))^\top y(s) - y(s)^\top S_{c} y(s) - d_{c}(s)^\top S_{c} y(s)\nonumber \\
    &\qquad \quad   + \left(d(s) - (\BB x(s)) - S_{c} y(s) \right)^\top  d_{c}(s) - d_{c}(s)^\top S_{c} d_{c}(s)\nonumber\\
    &=\left\langle \A x(s),x(s) \right\rangle_{X} - (K v_{2}(s))^\top \RR(K v_{2}(s)) + d(s)^\top y(s)  - (\BB x(s))^\top y(s) - y(s)^\top S_{c} y(s) \nonumber \\
    &\qquad \quad - d_{c}(s)^\top S_{c} y(s) + \left(B_{c}^\top K v_{2}(s)\right)^\top d_{c}(s) . \nonumber
\end{align}

Using the fact that system \eqref{syst:PHS} satisfies  \eqref{eqn:impedance_passive}, together with the nonnegativity of $\RR$ and Young's inequality, we obtain that for arbitrary $\lambda>0$ 
\begin{align*}
    \dot{\tilde{E}}(\tilde x(s)) 
    &\le \left(|d(s)| + |d_{c}(s)| \|S_c\|\right) |y(s)| -(K v_{2}(s))^\top \RR(K v_{2}(s)) + \|B_{c}\| |K v_{2}(s)| |d_{c}| - \delta |y(s)|^{2} \\
    &\le \frac{\lambda}{2}|d(s)|^{2} + \left(\frac{\lambda}{2} \|S_c\|^{2} + \frac{1}{2}\right) |d_{c}(s)|^{2} + \left(\frac{1}{\lambda}- \delta \right)|y(s)|^2 + \frac{1}{2}\|B_{c}\|^{2} |K v_{2}(s)|^{2}  .
\end{align*}
Choosing $\lambda := \frac{1}{\delta}$, and applying Grönwall's inequality to
\begin{equation*}
    \dot{\tilde{E}}(\tilde x(s)) 
    \le \|K\|\|B_{c}\|^{2} \tilde E(\tilde x(s)) + \frac{1}{2\delta}|d(s)|^{2} + \left(\frac{1}{2\delta} \|S_c\|^{2} + \frac{1}{2}\right) |d_{c}(s)|^{2} ,
\end{equation*}
yields for $t\in(0,t_{max})$
\begin{equation*}
    \tilde E(\tilde x(t)) 
    \le e^{\|B_{c}\|^{2}\|K\| t} \Bigg(\tilde E(\tilde x_{0}) + \frac{1}{2\delta} \int_{0}^{t} |d(s)|^{2} ds + \left( \frac{1}{2\delta} \|S_{c}\|^{2} + \frac{1}{2} \right) \int_{0}^{t} |d_{c}(s)|^{2} ds \Bigg).
\end{equation*}
Since the energy $\tilde E$ is equivalent to the norm, the solution is uniformly bounded on any finite interval, which by \cite[Chapter 6, Theorem 4.1]{pazy2012semigroups} prevents the solution from blowing up and yields $t_{max}=\infty$.

We note that $\D$ is dense in $\tilde X \times \mathrm L_{\mathrm{loc}}^{2}([0,\infty);\R^{m})^{2}$. Thus, for every $\tilde x_{0} \in \tilde X$ and $d, d_{c} \in \mathrm L_\mathrm{loc}^{2}([0,\infty);\R^{m})$ there exist a sequence $(
\tilde x_{0,n}, d_{n}, d_{c,n}) \in \D$ such that $(\tilde x_{0,n}, d_{n}, d_{c,n})$ converges to $(\tilde x_{0}, d, d_{c})$. 
For each triple $(\tilde x_{0,n},d_{n},d_{c,n}) \in \D$, $n\in \mathbb N$, there exists a  classical solution $\tilde x_{n} \in C([0,\infty);D(\tilde \A)) \cap C^{1}([0,\infty);\tilde X)$ such that for $t >0$
\begin{equation}\label{eqn:eqn3}
\tilde E(\tilde x_{n}(t)) 
    \le e^{\|B_{c}\|^{2} \|K\| t} \left( \tilde E(\tilde x_{0,n}) + \frac{1}{2\delta} \|d_{n|[0,t]}\|^{2} + \left( \frac{1}{2\delta} \|S_{c}\|^{2} + \frac{1}{2} \right) \|d_{c,n|[0,t]}\|^{2} \right).
\end{equation}
Since the energy $\tilde E$ is equivalent to the norm and by taking the limit in inequality \eqref{eqn:eqn3}, we see that the inequality holds for the solution $\tilde x_{ext} = \displaystyle\lim_{n \to \infty} \tilde x_{n}$ \cite[Theorem 3.4]{SchmidZwart:21}. 
Moreover, for $(\tilde x_{0},d,d_{c}) \in \D$, the generalized solution coincides with the classical solution, which completes the proof.
\end{proof}

\section{Input-to-state stability}\label{section4}

In order to achieve input-to-state stability of the closed-loop system we need to impose some extra conditions on the nominal system and on the controller. The assumptions imposed below are adapted from the multiplier framework developed by \cite[Lemma 8.1 and Theorem 8.2]{Aug:19} to establish exponential stability of second-order port-Hamiltonian systems under dissipative boundary feedback. In particular, the coefficient condition is automatically
satisfied when the Hamiltonian density $\HH$ is constant and $P_{0}=P_{1}=0$, while the boundary inequality requires the relevant boundary traces to be controlled by the input and output. Applications to Euler–Bernoulli beams, including conditions adapted to their particular structure, are discussed in \cite[Sections 6.4-6.5]{Aug:16} and \cite[Section 9]{Aug:19}. These assumptions therefore provide established and verifiable sufficient conditions with physically relevant applications, which we use here to derive an ISS estimate in the presence of disturbances.

\begin{assumption}\label{Assumption3}
   For the port-Hamiltonian system \eqref{syst:PHS}, we assume that 
     \begin{enumerate}
         \item\label{item:3.1} there exists $\kappa>0$ such that for $x \in  D(\A)$ 
        \begin{equation*}
            \kappa \left( |(\HH x)(0)|^{2} + |(\HH x)^\prime (0)|^{2}  + | (\HH x)(1)|^{2} \right)  
            \le  |\BB x|^{2} + | \CC x|^{2} .
        \end{equation*}

        \item\label{item:3.2} $\HH^\prime$, $\HH$, $P_{0}$ and $P_{1}$ are chosen such that 
             \begin{align*}
                2 >& \left\| (\zeta - 1)(\HH^\prime(\zeta) \HH^{-1}(\zeta) + P_{2}^{-1} P_{1})\right\|_{ \mathrm L^\infty ((0,1);\R^{n\times n})} \\
                & \qquad + \frac{1}{\surd 2} \left| P_{0}^\top P_{2}^{-1} + P_{2}^{-1} P_{0} P_{1} P_{2}^{-1} P_{1} P_{2}^{-1} \right|  + \frac{1}{2} \left| \left(P_{2}^{-1} P_{0}\right)^\top P_{2}^{-1} P_{0} \right|  .
            \end{align*}
     \end{enumerate}
\end{assumption}

\begin{remark}\label{rem:remark4.2}
\begin{enumerate}
    \item The endpoints $0$ and $1$ play symmetric roles, i.e. Assumption \ref{Assumption3}(\ref{item:3.1}) can be replaced by 
    \begin{equation*}
        \kappa \left( |(\HH x)(1)|^{2} + |(\HH x)^\prime (1)|^{2}  + |(\HH x)(0)|^{2} \right) 
            \le  |\BB x|^{2} + | \CC x|^{2}
    \end{equation*}
    
    \item From Lemma 8.1 of \cite{Aug:19}, it follows that Assumption \ref{Assumption3}(\ref{item:3.2}) implies the existence of a functional $q : X \to \R$ such that there exist $c, \, \hat{c}>0$ such that $|q(x)| \le \hat{c} E(x)$ and
    \begin{align*}
        E(x(t)) + \dot q(x)
        & \le c \left( |(\HH x)(0)|^{2} + | (\HH x)'(0)|^{2} + |(\HH x)(1)|^{2} \right)  .
    \end{align*} 
    
    \item If in addition the port-Hamiltonian system \eqref{syst:PHS} is impedance energy-preserving, then Assumption \ref{Assumption3} implies that it is exactly observable in finite time. For a classical solution $x(\cdot,t)$ of the port-Hamiltonian system with initial condition $x(\cdot,0)=x_0$ and $u=0$, let $q: X \to \R$ be given by Lemma 6.3.3 of \cite{Aug:16}. Then we obtain 
     $|q(x)| \le \hat{c} E(x)$ and
    \begin{align*}
        E(x(t)) + \frac{d}{dt} q(x)
        & \le \frac{c}{\kappa}  |y(t)|^{2}  .
    \end{align*} 
    Thus, for $x_{0} \in D(A)$ 
    and $t>0$
    \begin{equation}\label{eqn:E&y}
        \frac{c}{\kappa} \int_{0}^{t} |y(s)|^{2} ds \ge \int_{0}^{t} E(x(s)) ds - 2\hat{c} E(x_{0})   .
    \end{equation}
    Since the port-Hamiltonian system is impedance energy-preserving, it follows that the energy $t \mapsto E(x(t))$ is constant on $[0,\infty)$. Hence, for $t>0$ 
    \begin{equation*}
        \frac{c}{\kappa} \int_{0}^{t} |y(s)|^{2} ds \ge (t- 2\hat{c}) E(x_{0}) .
    \end{equation*}
    Let $\varepsilon>0$. Choosing $t_{0} := 2\hat{c} +\varepsilon$ and $c_{obs}:=\varepsilon\frac{\kappa}{c}$, for $t \in [t_{0}, \infty)$ we obtain the  observability inequality 
    \begin{equation*}
        \int_{0}^{t} |y(s)|^{2} ds \ge c_{obs} \|x_0\|^{2}_X,
    \end{equation*}
   which implies that the port-Hamiltonian system \eqref{syst:PHS} is exactly observable in finite time.

   \item If $P_{1}=P_{0}=0$, then Assumption \ref{Assumption3}(\ref{item:3.2}) is replaced by, for some $\varepsilon>0$ 
   $$ \HH(\zeta)- (1-\zeta) \HH'(\zeta) \ge \varepsilon I , $$  which is always true if $\HH$ is constant \cite[Lemma 4.3.17]{Aug:16}.

   \item For the Euler-Bernoulli beam model, which can be formulated as a port-Hamiltonian system of the form
   \begin{equation*}
       \HH = \begin{bmatrix}
           \HH_{1} & 0 \\
           0 & \HH_{2}
       \end{bmatrix} , \qquad  P_{2} = \begin{bmatrix}
           0 & -P^\top \\
           P & 0
       \end{bmatrix} , \qquad P_{1} = 0,
   \end{equation*}
   where $P \in \R^{\frac{n}{2} \times \frac{n}{2}}$ is an invertible matrix, Assumption \ref{Assumption3}(\ref{item:3.1}) can be replaced by \cite[Lemma 6.4.1]{Aug:16}
   \begin{equation*}
       \kappa |(\HH x)(0)|^{2} + |(\HH x_{1})'(0)|^{2} + |(\HH x_{2})(1)|^{2} \le  |\BB x|^{2} + | \CC x|^{2} .
   \end{equation*}
\end{enumerate}
\end{remark}

\medskip
The following theorem provides sufficient conditions to obtain input-to-state stability of the closed-loop system \eqref{syst:closed-loop}. To this end, we impose the following assumptions on the finite-dimensional nonlinear system \eqref{syst:controller}.

\begin{assumption}\label{Assumption4}
    \begin{enumerate}  
        \item There exist constants $\overline{c}_{1},\underline{c}_{1}>0$ such that for $v_{1} \in \R^{n_{c}}$
        \begin{equation*}
            \overline{c}_{1} v_{1}^\top \nabla \PP(v_{1}) \ge \PP(v_{1}) \ge \underline{c}_{1} |v_{1}|^{2} .
        \end{equation*}
        
        \item There exist $\overline{c}_{2}, \underline{c}_{2}>0$ such that for $v_{2}\in \R^{n_{c}}$ 
        \begin{equation*}
            \overline{c}_{2} v_{2}^\top \RR(v_{2}) \ge |v_{2}|^{2} \ge \underline{c}_{2} |\RR(v_{2})|^{2} .
        \end{equation*}
    \end{enumerate}
\end{assumption}

\begin{thm}\label{thm:main}
    Suppose that Assumptions \ref{Assumption3} and \ref{Assumption4} are satisfied. Then the closed-loop system \eqref{syst:closed-loop} is input-to-state stable, i.e. there exist a time $T_{0}>0$, $\beta_{0} \in (0,1)$ and $C_1, C_{2}>0$
    such that for all $\tilde x_{0} \in \tilde X$ and $d, d_{c} \in \mathrm L_{\mathrm{loc}}^{2}([0,\infty);\R^{m})$, the generalized solution to \eqref{syst:closed-loop} satisfies
    \begin{equation*}
        \| \tilde x_{ext}(t)\|_{\tilde X} \le \beta\left(\|\tilde x_{0}\|_{\tilde X}, t \right) + \gamma_{1} \left(\|d_{|[0,t]}\|_{\mathrm L^{2}} \right) + \gamma_{2} \left(\|d_{c|[0,t]} \|_{\mathrm L^{2}} \right) ,\quad t>0 .
    \end{equation*}
    where
    \begin{equation*}
    \beta: (r,t) \mapsto \phi_{1}^{-1} \left(\beta_{0}^{\frac{t}{T_{0}}} \phi_{2}(r) \right) \in \mathcal{KL}  \qquad 
    \gamma_{1} : r \mapsto \phi_{1}^{-1} \left( 3C_{1} r^{2} \right) \in \mathcal K_{\infty} \qquad
    \gamma_{2} : r \mapsto  \phi_{1}^{-1} \left( 3C_{2} r^{2}\right) \in \mathcal K_{\infty} ,
   \end{equation*}
   where $\phi_{1}$ and $\phi_{2}$ are the comparison functions introduced in \eqref{eqn:energy_equivalence}.
\end{thm}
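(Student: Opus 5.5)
We will first work with classical solutions, i.e.\ $(\tilde x_0,d,d_c)\in\D$, and at the end pass to generalized solutions by density, exactly as in the proof of Proposition \ref{prop:prop3.4}. The form of $\beta$ in the statement points to the goal. We want a time $T_0>0$, a constant $\beta_0\in(0,1)$ and constants $C_1,C_2>0$ such that for every $t\ge0$
\begin{equation*}
\tilde E(\tilde x(t+T_0))\le \beta_0\,\tilde E(\tilde x(t)) + C_1\|d_{|[t,t+T_0]}\|_{\mathrm L^2}^2 + C_2\|d_{c|[t,t+T_0]}\|_{\mathrm L^2}^2 .
\end{equation*}
Iterating over the intervals $[kT_0,(k+1)T_0]$ and summing the geometric series (absorbing $1/(1-\beta_0)$ into the constants) would give, for $t\in[kT_0,(k+1)T_0)$,
\begin{equation*}
\tilde E(\tilde x(t))\lesssim \beta_0^{t/T_0}\tilde E(\tilde x_0)+C_1\|d_{|[0,t]}\|^2+C_2\|d_{c|[0,t]}\|^2 .
\end{equation*}
On a leftover piece shorter than $T_0$ we would use the Grönwall bound from Proposition \ref{prop:prop3.4}. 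Applying $\phi_1(\|\tilde x\|)\le\tilde E\le\phi_2(\|\tilde x\|)$ and the inequality $\phi_1^{-1}(a+b+c)\le\phi_1^{-1}(3a)+\phi_1^{-1}(3b)+\phi_1^{-1}(3c)$ then yields the stated $\beta,\gamma_1,\gamma_2$.

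To obtain the one-step contraction, we will build a perturbed Lyapunov functional $W=\tilde E+\varepsilon\big(q(x)+v_1^\top v_2\big)$. Here $q$ is the multiplier functional of Remark \ref{rem:remark4.2}(2), and the term $v_1^\top v_2$ (possibly weighted by $K$) is the standard cross term that makes the potential energy $\PP(v_1)$ appear with a negative sign. Since $|q(x)|\le\hat c E(x)$ and $|v_1^\top v_2|\lesssim|v_1|^2+|v_2|^2\lesssim \tilde E$ by Assumption \ref{Assumption4}(1), the functional $W$ is equivalent to $\tilde E$ for small $\varepsilon$.

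Next we differentiate each piece:
\begin{enumerate}
\item For $\dot{\tilde E}$ we use the exact identity in the proof of Proposition \ref{prop:prop3.4}. This gives the good terms $-\delta|y|^2$ and $-(Kv_2)^\top\RR(Kv_2)\le -\overline c_2^{-1}|Kv_2|^2$, the latter by Assumption \ref{Assumption4}(2).
\item For $\dot q$, Remark \ref{rem:remark4.2}(2) and Assumption \ref{Assumption3}(1) give $\dot q\le -E(x)+\frac{c}{\kappa}(|\BB x|^2+|\CC x|^2)$. This must be redone with the inhomogeneous boundary input. We then express $\BB x=-B_c^\top Kv_2-S_c(y+d_c)+d$ and $\CC x=y$, which bounds these terms by $|y|^2+|Kv_2|^2+|d|^2+|d_c|^2$.
\item For the cross term, $\frac{d}{dt}(v_1^\top v_2)=(Kv_2)^\top v_2+v_1^\top(-\nabla\PP(v_1)-\RR(Kv_2)+B_c(y+d_c))$. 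Here $-v_1^\top\nabla\PP(v_1)\le-\overline c_1^{-1}\PP(v_1)$, and the remaining terms are handled by Young's inequality, using $|\RR(Kv_2)|^2\le \underline c_2^{-1}|Kv_2|^2$.
\end{enumerate}
Choosing $\varepsilon$ small so that all the $|y|^2$ and $|Kv_2|^2$ contributions are dominated by the dissipation terms from $\dot{\tilde E}$, we get
\begin{equation*}
\dot W\le -\mu\tilde E + C(|d|^2+|d_c|^2)
\end{equation*}
for some $\mu>0$. Combined with the equivalence of $W$ and $\tilde E$, this gives the contraction directly, with $\beta_0=e^{-\mu' T_0}$ for any $T_0$. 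Alternatively, integrating over $[t,t+T_0]$ in the style of \eqref{eqn:E&y} gives the same estimate.

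The main obstacle is the multiplier estimate. Remark \ref{rem:remark4.2}(2) is formulated for the homogeneous system, while here $x$ solves \eqref{syst:PHS} with a nonzero boundary input. We need to check that the derivative of Augner's multiplier $q(x)=\int_0^1(\zeta-1)\cdots$ still satisfies $E+\dot q\le c(\text{boundary traces})$ whenever $x(t)\in D(\A)$. This should hold because the computation uses only integration by parts on $D(\A)$ and never the boundary condition $\BB x=0$. Assumption \ref{Assumption3}(1) then converts the traces into $|\BB x|^2+|\CC x|^2$.

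Two smaller points need care. First, Young's inequality must be applied so that the $d$ and $d_c$ terms carry only constants and no state-dependent factors. Second, the $\mathcal{KL}$ property of $\beta$ must be checked: $\beta(r,\cdot)$ is strictly decreasing because $\beta_0<1$, and $\beta(\cdot,t)\in\K$ because $\phi_1^{-1},\phi_2\in\K_\infty$. The extension to generalized solutions follows by passing to the limit along the approximating sequence, since both sides of the estimate are continuous in $\tilde x_0$ and in $d,d_c\in\mathrm L^2$.
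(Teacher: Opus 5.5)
Your argument is correct, but it follows a different route from the paper's.

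\textbf{How the two proofs differ.} You build a strict ISS-Lyapunov functional $W=\tilde E+\varepsilon(q(x)+v_1^\top v_2)$ and close the argument with the pointwise inequality $\dot W\le-\mu\tilde E+C(|d|^2+|d_c|^2)$. The paper never forms such a $W$ and works only with time-integrated inequalities. Lemma~\ref{lem:lemma2} combines three bounds: $t\,\tilde E(\tilde x(t))\le\int_0^t\tilde E(\tilde x(s))\,ds+\dots$, which comes from the dissipation inequality \eqref{eqn:eqn1}; the integrated multiplier bound for $\int_0^t E(x(s))\,ds$; and the controller estimate for $\int_0^t E_c(v(s))\,ds$ quoted from \cite{RamirezZwartLeGorrec:17}. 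Together these give a bound with weight $\rho(t)\sim 1/t$. The paper then plays this against the dissipation $-\delta\int_0^t|y|^2\,ds$ to obtain a contraction at a time $T_0$ with $\rho(T_0)<1$, and iterates that contraction.

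\textbf{Shared ingredients.} Both proofs rest on the same tools. The first is Augner's multiplier applied to classical solutions with inhomogeneous boundary data; the paper relies on this exactly as you do, without further comment. The second is Assumption~\ref{Assumption3}(1) combined with $\BB x=d-B_c^\top Kv_2-S_c(y+d_c)$. The third is Assumption~\ref{Assumption4}. Your cross term $v_1^\top v_2$ is essentially the mechanism behind the quoted controller lemma.

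\textbf{What each route buys.} Your route is shorter and gives the fading-memory bound $W(t)\le e^{-\mu' t}W(0)+C\int_0^t e^{-\mu'(t-s)}(|d|^2+|d_c|^2)\,ds$ at once. This makes the iteration unnecessary. It also avoids the leftover-interval step of the paper: for $t\in[nT_0,(n+1)T_0)$ one only has $\beta_0^{n}\ge\beta_0^{t/T_0}$, so the exponent there must be shifted. The paper's route, in turn, needs the multiplier and controller estimates only in integrated, observability-type form. It also mirrors the first-order argument of \cite{SchmidZwart:21}.

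\textbf{One imprecision in your text.} Since $(1-\varepsilon C)\tilde E\le W\le(1+\varepsilon C)\tilde E$, you get $\tilde E(\tilde x(t))\le M e^{-\mu' t}\tilde E(\tilde x_0)+\dots$ with $M=\frac{1+\varepsilon C}{1-\varepsilon C}>1$. A one-step contraction therefore needs $T_0$ large enough that $Me^{-\mu' T_0}<1$; it does not hold for an arbitrary $T_0$. It is simpler to skip the iteration altogether and absorb the factor $3M$ into $\phi_2$, which is only required to dominate $\tilde E$. The paper's own proof likewise ends with a factor $3$ inside $\beta$.
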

To prove Theorem \ref{thm:main}, we first establish the following lemma.
\begin{lemma}\label{lem:lemma2}
Suppose that  Assumptions \ref{Assumption3} and \ref{Assumption4} hold. Then there exists a strictly decreasing function $\rho:(0,\infty)\to \R^{+}$ with $\displaystyle\lim_{t\to\infty} \rho(t)=0$, such that for every $(\tilde x_{0},d,d_{c}) \in \D$ the corresponding classical solution $(\tilde x, (d,d_c), y)= ((x,v_1, v_2), (d,d_c), y)$ satisfies for all $t>0$
\begin{align*}
    \tilde E(\tilde x(t))
    &\le \rho(t) \left( \tilde E(\tilde x_{0}) + \int_{0}^{t} |d(s)|^{2} ds + \int_{0}^{t} |y(s)|^{2} ds \right) \\ 
    &\qquad \quad + \left(1+\rho(t)\right) \left( \|B_{c}\|^{2} \int_{0}^{t} |Kv_{2}(s)|^{2} ds +  \int_{0}^{t} |d_{c}(s)|^{2} ds + \int_{0}^{t} \left(|d(s)| + \|S_{c}\| |d_{c}(s)| \right) |y(s)| ds \right) .
\end{align*}
\end{lemma}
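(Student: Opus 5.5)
The approach is a multiplier (Lyapunov-type) argument. We build a perturbed energy functional $\tilde E + \varepsilon\,\mathcal Q$ whose derivative along classical solutions is bounded by $-\tilde E$ plus terms in $d$, $d_c$, $y$ and $Kv_2$. We then integrate this on $[0,t]$ and combine it with the dissipation identity \eqref{eqn:derive_energy}.

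\textbf{Step 1: the distributed part.} Fix $(\tilde x_0,d,d_c)\in\D$ and the classical solution $(x,v_1,v_2)$. By Remark~\ref{rem:remark4.2}(2), Assumption~\ref{Assumption3}(\ref{item:3.2}) yields a functional $q$ with $|q(x)|\le \hat c E(x)$ and
\[
E(x(t))+\tfrac{d}{dt}q(x(t))\le c\bigl(|(\HH x)(0)|^2+|(\HH x)'(0)|^2+|(\HH x)(1)|^2\bigr).
\]
Here the derivative is taken along the PHS driven by the boundary input $u=\BB x$. The multiplier identity of \cite{Aug:19} is an integration by parts and does not use $u=0$, so I expect it to remain valid for a general input. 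Assumption~\ref{Assumption3}(\ref{item:3.1}) then bounds the right-hand side by $\frac{c}{\kappa}(|\BB x|^2+|y|^2)$. Since $\BB x=d-S_c(y+d_c)-B_c^\top Kv_2$, we get
\[
|\BB x|^2\le 4\bigl(|d|^2+\|S_c\|^2|y|^2+\|S_c\|^2|d_c|^2+\|B_c\|^2|Kv_2|^2\bigr).
\]

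\textbf{Step 2: the controller part.} To recover $\PP(v_1)$ I use the multiplier $q_c(v):=v_1^\top K v_2$, which is classical for mechanical systems. Along solutions,
\[
\dot q_c=(Kv_2)^\top Kv_2 - v_1^\top\nabla\PP(v_1)-v_1^\top\RR(Kv_2)+v_1^\top B_c(y+d_c).
\]
Assumption~\ref{Assumption4}(1) gives $-v_1^\top\nabla\PP(v_1)\le -\PP(v_1)/\overline c_1$. Assumption~\ref{Assumption4}(2) gives $|\RR(Kv_2)|^2\le |Kv_2|^2/\underline c_2$. Using Young's inequality together with $|v_1|^2\le \PP(v_1)/\underline c_1$, the cross terms absorb half of $\PP(v_1)/\overline c_1$ and leave multiples of $|Kv_2|^2$, $|y|^2$ and $|d_c|^2$. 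Since $\frac12 v_2^\top Kv_2\le \frac{1}{2\lambda_{\min}(K)}|Kv_2|^2$, the kinetic part of $E_c$ is already controlled by $|Kv_2|^2$. Moreover, $|q_c|\le C\,E_c(v)$ by Assumption~\ref{Assumption4}(1).

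\textbf{Step 3: integrate and combine.} Set $\mathcal Q:=q(x)+q_c(v)$, so that $|\mathcal Q|\le \hat C\tilde E$. Adding Steps 1 and 2 and integrating gives
\[
\int_0^t\tilde E(\tilde x(s))\,ds\le \hat C\bigl(\tilde E(\tilde x(t))+\tilde E(\tilde x_0)\bigr)+C\!\int_0^t\bigl(|d|^2+|d_c|^2+|y|^2+|Kv_2|^2\bigr)ds.
\]
Next I integrate \eqref{eqn:derive_energy}, dropping the nonpositive terms $-(Kv_2)^\top\RR(Kv_2)$ and $-\delta|y|^2$, and bounding $\|B_c\||Kv_2||d_c|\le \frac12\|B_c\|^2|Kv_2|^2+\frac12|d_c|^2$. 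For $0\le s\le t$ this gives
\[
\tilde E(\tilde x(t))\le \tilde E(\tilde x(s))+I(t),\qquad I(t):=\int_0^t\Bigl(\|B_c\|^2|Kv_2|^2+|d_c|^2+(|d|+\|S_c\||d_c|)|y|\Bigr)d\sigma .
\]
Both $\tilde E(\tilde x(t))$ and $I(t)$ are nondecreasing in the cutoff, so integrating in $s$ yields
\[
t\,\tilde E(\tilde x(t))\le \int_0^t\tilde E\,ds + t\,I(t)\le \hat C\,\tilde E(\tilde x(t))+\hat C\,\tilde E(\tilde x_0)+C\!\int_0^t(|d|^2+|y|^2)\,ds + (t+C')\,I(t).
\]
For $t>\hat C$ we divide by $t-\hat C$. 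This gives the claim with $\rho(t)\sim C''/(t-\hat C)$ in front of $\tilde E(\tilde x_0)+\int|d|^2+\int|y|^2$ and $(1+\rho(t))$ in front of $I(t)$. For small $t$ we instead use the Gronwall bound from Proposition~\ref{prop:prop3.4}. Finally, $\rho$ is chosen as a strictly decreasing majorant of both bounds tending to $0$, for example $\rho(t)=C/t$ after enlarging the constants.

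\textbf{Main obstacle.} The key difficulty is the bookkeeping that keeps all $|Kv_2|^2$ and $|d_c|^2$ terms inside the $(1+\rho)$-group. A second point to check is that the multiplier estimate of Step 1 holds for classical solutions with nonzero boundary input.
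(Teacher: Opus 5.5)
Your overall architecture matches the paper's: integrate \eqref{eqn:derive_energy} over $[r,t]$ and then in $r$, and bound $\int_0^t\tilde E$ using the multiplier $q$, Assumption~\ref{Assumption3}(\ref{item:3.1}) and $u=d-S_c(y+d_c)-B_c^\top Kv_2$. Step~2, however, fails as written. For $q_c=v_1^\top Kv_2$ one actually has $\dot q_c=|Kv_2|^2-v_1^\top K\nabla\PP(v_1)-v_1^\top K\RR(Kv_2)+v_1^\top KB_c(y+d_c)$, not the expression you wrote. Assumption~\ref{Assumption4}(1) gives no sign for $v_1^\top K\nabla\PP(v_1)$ when $K$ is not scalar. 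For example, $\PP(v_1)=\frac12(v_{1,1}^2+100v_{1,2}^2)$ satisfies Assumption~\ref{Assumption4}(1), yet with $K=\left(\begin{smallmatrix}1&1/2\\1/2&1\end{smallmatrix}\right)$ the form $v_1^\top K\nabla\PP(v_1)$ is indefinite. So the key bound $\dot q_c\le-\PP(v_1)/(2\overline{c}_1)+\dots$ is unjustified. Your formula is the derivative of $q_c=v_1^\top v_2$, except that its first term becomes $v_2^\top Kv_2\le\lambda_{\min}(K)^{-1}|Kv_2|^2$. Use that multiplier instead and Step~2 goes through.

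The second gap is the obstacle you flag but do not resolve. Step~2 produces terms $C\int_0^t|Kv_2|^2ds$ whose coefficients carry no factor $\|B_c\|^2$. They come from $v_2^\top Kv_2$, from $v_1^\top\RR(Kv_2)$ via $|\RR(w)|^2\le|w|^2/\underline{c}_2$, and from the kinetic part of $E_c$. In Step~3 you silently absorb them into $C'I(t)$, which needs $C\le C'\|B_c\|^2$. That is impossible when $B_c=0$, which the lemma allows, and no enlargement of the constants in $\rho$ fixes it. The missing idea is to \emph{keep} the damping term you discard. Combining \eqref{eqn:derive_energy} with Assumption~\ref{Assumption4}(2) and Young's inequality gives $\frac{1}{2\overline{c}_2}\int_0^t|Kv_2|^2ds\le\tilde E(\tilde x_0)+\int_0^t(|d|+\|S_c\||d_c|)|y|\,ds+\frac{\overline{c}_2}{2}\|B_c\|^2\int_0^t|d_c|^2ds$. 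After the division by $t$ (or $t-\hat C$), these contributions fit into the $\rho(t)$-weighted parts of the claimed bound. With both repairs, your in-line controller argument replaces the estimate the paper imports from \cite[Lemma 17]{RamirezZwartLeGorrec:17}, namely $\int_0^tE_c(v(s))ds\le c_0\bigl(E_c(v_0)+\int_0^t|y|^2ds+\int_0^t|d_c|^2ds\bigr)$. Because that bound contains no $\int|Kv_2|^2$, every $|Kv_2|^2$ term in the paper enters only through $|u|^2$ or \eqref{eqn:eqn1} and carries $\|B_c\|^2$. Your handling of $\hat C\tilde E(\tilde x(t))$, dividing by $t-\hat C$ and adding a separate small-time bound, is correct. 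The paper's reuse of \eqref{eqn:eqn1} with $r=0$ is simpler and needs no case distinction.
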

\begin{proof}
Equation \eqref{eqn:derive_energy} shows 
\begin{align*}
    \dot{\tilde{E}}(\tilde x(s)) 
    &\le \left(|d(s)| + \|S_{c}\| |d_{c}(s)| \right) |y(s)|  + \|B_{c}\| |K v_{2}(s)| |d_{c}| - \delta |y(s)|^{2} \\
    &\le\left(|d(s)| + \|S_{c}\| |d_{c}(s)| \right) |y(s)|  
    + \frac{1}{2}\|B_{c}\|^{2} |K v_{2}(s)|^{2} + \frac{1}{2}|d_{c}(s)|^{2} .
\end{align*}
Let $t \ge r\ge 0$. Integration over the interval $[r,t]$ implies 
\begin{equation}\label{eqn:eqn1}
    \tilde E(\tilde x(t)) 
    \le \tilde E(\tilde x(r)) + \int_r^t \left(|d(s)| + \|S_{c}\| |d_{c}(s)| \right) |y(s)| ds + \frac{1}{2} \|B_{c}\|^{2} \int_{r}^{t} |Kv_{2}(s)|^{2} ds   + \frac{1}{2}  \int_{r}^{t} |d_{c}(s)|^{2}ds . 
\end{equation}
Hence,  for $t\ge 0$ we get  
\begin{align}
     t \tilde E(\tilde x(t)) 
     &= \int^{t}_{0} \tilde E(\tilde x(t)) dr \nonumber\\
     &\le \int^{t}_{0} \tilde E(\tilde x(r)) dr  + \int^{t}_{0} \int_{r}^{t} \left(|d(s)| + \|S_{c}\| |d_{c}(s)| \right) |y(s)| ds dr + \frac{1}{2} \|B_{c}\|^{2} \int^{t}_{0} \int_{r}^{t} |Kv_{2}(s)|^{2} ds dr \nonumber\\
     &\qquad \quad  + \frac{1}{2} \int^{t}_{0} \int_{r}^{t} |d_{c}(s)|^{2} ds dr  \nonumber \\
     &\le \int^{t}_{0} \tilde E(\tilde x(s)) ds  + t \int^{t}_{0} \left(|d(s)| + \|S_{c}\| |d_{c}(s)| \right) |y(s)| ds  + \frac{t}{2} \|B_{c}\|^{2} \int^{t}_{0}  |Kv_{2}(s)|^{2} ds \label{eqn:t_E(t)} \\
     &\qquad \quad + \frac{t}{2} \int^{t}_{0} |d_{c}(s)|^{2} ds  \nonumber .
\end{align}
On the other hand, for every $(\tilde x_{0},d, d_{c}) \in \D$, Remark \ref{rem:remark4.2} provides a functional $q: X \to \R$ such that for $x \in X$, $|q(x)| \le \hat{c} \|x\|^{2}_{X}$ and
\begin{equation}\label{eqn:q(x(t))}
    E(x(t)) + \dot q(x)
    \le c \left( |(\HH x)(0,t)|^{2} + |(\HH x)^\prime (0,t)|^{2} + |(\HH x)(1,t)|^{2} \right) .
\end{equation}
In addition, under Assumption \ref{Assumption4} and from \cite[Lemma 17]{RamirezZwartLeGorrec:17}, it is easy to verify that for some constant $c_{0}>0$, the energy of the controller satisfies
\begin{equation*}
    \int_{0}^{t} E_{c}(v(s)) ds 
    \le c_{0} \left( E_{c}(v_{0}) + \int_{0}^{t} |y(s)|^{2} ds + \int_{0}^{t} |d_{c}(s)|^{2} ds \right) .
\end{equation*}
Using equation \eqref{eqn:q(x(t))}, the energy of the closed-loop system satisfies
\begin{align*}
     \int_{0}^{t} \tilde E(\tilde x(s)) ds
     &= \int_{0}^{t} E(x(s)) ds + \int_{0}^{t} E_{c}(v(s)) ds \\
     &\le q(0) - q(x) + c  \int_{0}^{t} \left( |(\HH x)(0,s)|^{2}  + |(\HH x)^\prime (0,s)|^{2} ds +  |(\HH x)(1,s)|^{2} \right) ds   \\ 
     &\qquad +  c_{0} \left( E_{c}(v_{0}) + \int_{0}^{t} |y(s)|^{2} ds + \int_{0}^{t} |d_{c}(s)|^{2} ds \right) \\
     &\le \hat{c} \tilde E(\tilde x_{0}) + \hat{c} \tilde E(\tilde x(t)) + \frac{c}{\kappa} \left( \int_{0}^{t} |u(s)|^{2} ds + \int_{0}^{t} |y(s)|^{2} ds \right) \\
     &\qquad \quad +  c_{0} \left( E_{c}(v_{0}) + \int_{0}^{t} |y(s)|^{2} ds + \int_{0}^{t} |d_{c}(s)|^{2} ds \right),
\end{align*}
where we used  Assumption \ref{Assumption3} and  $|q(x)|\le \hat{c} E(x(t))$ in the last estimate.
Using again \eqref{eqn:eqn1} and the fact that $E_{c}(v_{0})\le \tilde E(\tilde x_{0})$, it follows
\begin{equation}\label{eqn:integral_E(s)}
\begin{aligned}
    \int_{0}^{t} \tilde E(\tilde x(s)) ds
    &\le (2\hat{c} + c_{0}) \tilde E(\tilde x_{0}) + \hat{c} \int_{0}^{t} (|d(s)| + \|S_{c}\| |d_{c}(s)|) |y(s)| ds + \frac{\hat{c}}{2} \|B_{c}\|^{2} \int_{0}^{t} |Kv_{2}(s)|^{2} ds  \\
     &\qquad \quad  + (\frac{\hat c}{2} + c_0) \int_{0}^{t} |d_{c}(s)|^{2} ds +  \frac{c}{\kappa} \int_{0}^{t} |u(s)|^{2} ds + (\frac{c}{\kappa}+c_0)\int_{0}^{t} |y(s)|^{2} ds. 
\end{aligned}
\end{equation}
Substituting \eqref{eqn:integral_E(s)} into \eqref{eqn:t_E(t)} and together with 
\begin{align*}
    \int_{0}^{t} |u(s)|^{2} ds 
    &= \int_{0}^{t} |-B_{c}^\top K v_{2}(s) - S_{c}  y(s) -S_{c} d_{c}(s) + d(s)|^{2} ds \\
    &\le 4\|B_{c}\|^{2} \int_{0}^{t} |K v_{2}(s)|^{2} ds + 4\|S_{c}\|^{2} \int_{0}^{t} \left( |y(s)|^{2} + |d_{c}(s)|^{2} \right)ds + 4\int_{0}^{t} |d(s)|^{2} ds,
\end{align*}
we obtain
\begin{align*}
    t \tilde E(\tilde x(t))
     &\le  (2\hat{c} +c_{0}) \tilde E(\tilde x_{0}) + (t +\hat{c}) \int_{0}^{t} \left(|d(s)| + \|S_{c}\||d_{c}(s)| \right) |y(s)| ds + \frac{4c}{\kappa} \int_{0}^{t} |d(s)|^{2} ds  \\
     &\qquad  \quad + \left(\frac{t}{2} + \frac{\hat c}{2} + \frac{4c}{\kappa} \right) \|B_{c}\|^{2} \int_{0}^{t} |Kv_{2}(s)|^{2} ds +\left( \frac{t}{2} + \frac{\hat c}{2} + c_{0} + \frac{4c}{\kappa} \|S_{c}\|^{2} \right) \int_{0}^{t} |d_{c}(s)|^{2} ds  \\
     &\qquad \quad + \left(\frac{4c}{\kappa} \|S_{c}\|^{2} + \frac{c}{\kappa} + c_{0} \right) \int_{0}^{t} |y(s)|^{2} ds .
\end{align*}
Therefore, for $t >0$, taking $\rho(t):= \frac{1}{t} \max\left\{ 2 \hat{c} +c_{0}, \, \frac{4c}{\kappa} +\frac{\hat c}{2}, \, \frac{4c}{\kappa}\|S_{c}\|^{2} +c_{0} +\frac{\hat c}{2} +\frac{c}{\kappa} \right\}$ we have $\rho$ strictly decreasing with $\displaystyle\lim_{t\to\infty} \rho(t)=0$ and
\begin{align*}
    \tilde E(\tilde x(t))
     &\le \rho(t) \left(\tilde E(\tilde x_{0}) +  \int_{0}^{t} |d(s)|^{2} ds + \int_{0}^{t} |y(s)|^{2} ds \right) +(\frac{1}{2}+\rho(t)) \left(\|B_{c}\|^{2} \int_{0}^{t} |Kv_{2}(s)|^{2} ds +  \int_{0}^{t} |d_{c}(s)|^{2} ds\right) \\
     &\qquad \quad +(1+\rho(t)) \int_{0}^{t} \left(|d(s)| + \|S_{c}\| |d_{c}(s)| \right) |y(s)| ds .
\end{align*}
\end{proof}

Now, we are prepared to give the proof of the main result. 
\medskip

\noindent\textit{Proof of Theorem \ref{thm:main}}.
Using equation \eqref{eqn:derive_energy} with Assumption \ref{Assumption4}, we have for $\lambda>0$, $(\tilde x_{0},d,d_{c}) \in \D$ and $s\ge 0$
\begin{align}
    \dot{\tilde{E}}(\tilde x(s)) 
    &\le \left(|d(s)| + \|S_{c}\| |d_{c}(s)| \right) |y(s)| -(K v_{2}(s))^\top \RR(K v_{2}(s)) + \|B_{c}\| |K v_{2}(s)| |d_{c}| - \delta |y(s)|^{2} \nonumber \\ 
    &\le\left(|d(s)| + \|S_{c}\||d_{c}(s)| \right) |y(s)| -\frac{1}{\overline{c}_{2}}|K v_{2}(s)|^{2} 
    + \frac{1}{2\lambda}\|B_{c}\|^{2} |K v_{2}(s)|^{2} + \frac{\lambda}{2}|d_{c}(s)|^{2} - \delta |y(s)|^{2} . \label{eqn:eq4.6} 
\end{align}
Thus, by integrating over $[0,t]$ and for $\varepsilon>0$
\begin{equation}\label{eqn:eqn2}
\begin{aligned}
    \tilde E(\tilde x(t)) 
    &\le \tilde E(\tilde x_{0}) + \int_{0}^{t} \left(|d(s)| + \|S_{c}\| |d_{c}(s)| \right) |y(s)| ds + \left(\frac{1}{2\lambda} \|B_{c}\|^{2} -\frac{1}{\overline{c}_{2}} \right) \int_{0}^{t} |K v_{2}(s)|^{2} ds \\
     & \qquad \quad + \frac{\lambda}{2} \int_{0}^{t} |d_{c}(s)|^{2}ds  - \delta(1-\varepsilon) \int_{0}^{t} |y(s)|^{2} ds -\delta\varepsilon \int_{0}^{t} |y(s)|^{2} ds .
\end{aligned}
\end{equation}
By Lemma \ref{lem:lemma2}, we have that 
\begin{align*}
    - \int_{0}^{t} |y(s)|^{2} ds
     &\le -\frac{1}{\rho(t)} \tilde E(\tilde x(t)) + \tilde E(\tilde x_{0}) + \int_{0}^{t} |d(s)|^{2} ds + \frac{1+\rho(t)}{\rho(t)} \|B_{c}\|^{2} \int_{0}^{t} |Kv_{2}(s)|^{2} ds \\
     &\qquad \quad + \frac{1+\rho(t)}{\rho(t)} \int_{0}^{t} |d_{c}(s)|^{2} ds + \frac{1+\rho(t)}{\rho(t)} \int_{0}^{t} \left(|d(s)| + \|S_c\||d_{c}(s)| \right) |y(s)| ds .
\end{align*}
Plugging this into equation \eqref{eqn:eqn2}, we obtain for $\varepsilon>0$ 
\begin{align*}
    \tilde E(\tilde x(t)) 
    &\le \tilde E(\tilde x_{0}) +\left(1+ \frac{\delta \varepsilon(1+\rho(t))}{\rho(t)}\right) \int_{0}^{t} \left(|d(s)| + \|S_{c}\| |d_{c}(s)| \right) |y(s)| ds - \delta(1-\varepsilon) \int_{0}^{t} |y(s)|^{2} ds   \\
    &\qquad \quad  - \frac{\delta\varepsilon}{\rho(t)} \tilde E(\tilde x(t)) + \delta\varepsilon \tilde E(\tilde x_{0}) + \delta\varepsilon \int_{0}^{t} |d(s)|^{2} ds + \left(\frac{\lambda}{2}+ \frac{\delta \varepsilon(1+\rho(t))}{\rho(t)} \right) \int_{0}^{t} |d_{c}(s)|^{2} ds\\
    &\qquad \quad + \left( \frac{\delta \varepsilon(1+\rho(t))}{\rho(t)} \|B_{c}\|^{2} + \frac{1}{2\lambda}\|B_{c}\|^{2} -\frac{1}{\overline{c}_{2}} \right) \int_{0}^{t} |K v_{2}(s)|^{2} ds.
\end{align*}
Applying Young's inequality and regrouping terms, we obtain
\begin{align*}
    \left( 1+ \frac{\delta \varepsilon}{\rho(t)} \right)\tilde E(\tilde x(t)) 
    &\le \left(1 + \delta \varepsilon \right) \tilde E(\tilde x_{0}) + \left( \frac{\alpha}{2}\left(1+\frac{\delta \varepsilon(1+\rho(t))}{\rho(t)} \right) + \delta\varepsilon \right) \int_{0}^{t} |d(s)|^{2} ds \\
     &\qquad + \left(\frac{\lambda}{2}+ \frac{\alpha}{2} \left(1+ \frac{\delta \varepsilon(1+\rho(t))}{\rho(t)} \right)\|S_{c}\|^{2} + \frac{\delta \varepsilon(1+\rho(t))}{\rho(t)} \right) \int_{0}^{t} |d_{c}(s)|^{2} ds \\
    &\qquad + \left( \frac{\delta \varepsilon(1+\rho(t))}{\rho(t)}\|B_{c}\|^{2} + \frac{1}{2\lambda}\|B_{c}\|^{2} -\frac{1}{\overline{c}_{2}} \right) \int_{0}^{t} |K v_{2}(s)|^{2} ds \\
    &\qquad +\left(\frac{1}{\alpha}\left(1+ \frac{\delta \varepsilon(1+\rho(t))}{\rho(t)} \right) + \delta\varepsilon -\delta  \right) \int_{0}^{t} |y(s)|^{2} ds ,
\end{align*}
for all $t>0$ and arbitrary $\lambda,\alpha>0$. In particular, this holds for 
$$ \frac{1}{\delta} < \alpha < 2 C_{1}, \qquad
    \frac{\overline{c}_{2}}{2} \|B_{c}\|^{2} < \lambda < 2 C_{2} -\frac{1}{\delta}\|S_{c}\|^{2}, $$
where $C_{1}, C_{2}>0$ are constants such that 
\begin{equation*} 
    C_{2} > \frac{\overline{c}_{2}}{4} \|B_{c}\|^{2} +\frac{1}{2\delta}\|S_{c}\|^{2}, 
    \qquad C_{1} > \frac{1}{2\delta}. 
\end{equation*}
Now, let $T_{0} \in (0,\infty)$ be such that $\rho(T_{0})<1$ (this is possible because $\displaystyle\lim_{t\to\infty}\rho(t)=0$). We define 
\begin{equation*}
    \beta_{0} := \frac{1+ \delta \varepsilon}{1+ \frac{\delta \varepsilon}{\rho(T_{0})}} <1 ,
\end{equation*}
and we choose $\varepsilon \in (0,1)$ so small such that
\begin{align*}
    \frac{\delta \varepsilon(1+\rho(T_0))}{\rho(T_0)}\|B_{c}\|^{2} + \frac{1}{2\lambda}\|B_{c}\|^{2} &\le \frac{1}{\overline{c}_{2}}, \qquad \quad
    \frac{1}{\alpha} + \delta\varepsilon\left( 1 + \frac{1+ \rho(T_{0})}{\alpha \rho(T_{0})}\right)
    \le \delta,  \\
    \frac{\alpha}{2} +\left(1 + \frac{\alpha}{2}\right) \frac{\delta\varepsilon}{1 +\frac{\delta \varepsilon}{\rho(T_{0})}} &\le C_{1}, \qquad \quad
    \frac{\alpha}{2}\|S_{c}\|^{2} + \frac{\lambda}{2 +\frac{2\delta \varepsilon}{\rho(T_{0})}} + \left(1 + \frac{\alpha}{2} \|S_{c}\|^{2} + \frac{1}{\rho(T_{0})}\right) \frac{\delta\varepsilon}{1 +\frac{\delta \varepsilon}{\rho(T_{0})}} 
    \le C_{2} .
\end{align*}
Hence, we have
\begin{align*}
    \tilde E(\tilde x(T_{0})) 
    &\le \beta_{0} \tilde E(\tilde x_{0}) + C_{1} \int_{0}^{T_{0}} |d(s)|^{2} ds + C_{2} \int_{0}^{T_{0}} |d_{c}(s)|^{2} ds .
\end{align*}
Moreover, since $\frac{\overline{c_2}}{2} \|B_{c}\|^{2} < \lambda <2C_{2} -\frac{1}{\delta}\|S_{c}\|^{2} $, it follows from \eqref{eqn:eq4.6} that for $s\ge0$
\begin{equation*}
     \dot{\tilde{E}}(\tilde x(s)) 
    \le\left(|d(s)| + \|S_{c}\||d_{c}(s)| \right) |y(s)| + C_{2} |d_{c}(s)|^{2} .
\end{equation*}
For every $n \in \N_{0}$ and $(\tilde x_{0},d,d_{c}) \in \D$, by induction we obtain
\begin{align*}
    \tilde E(\tilde x(nT_{0})) 
    &\le \beta_{0}^{n} \tilde E(\tilde x_{0}) + C_{1} \|d_{|[0,nT_{0}]}\|^{2}_{\mathrm L^2}  + C_{2} \|d_{c|[0,nT_{0}]}\|^{2}_{\mathrm L^2} . 
\end{align*}
Let $n = n(t) \in \N_{0}$ such that $t- nT_{0} \in [0,T_{0})$. Taking $t= nT_{0} + (t - nT_{0})$, and by the cocycle property we get 
\begin{align*}
    \tilde E(\tilde x(t)) 
    &\le \beta_{0}^{n} \tilde E(\tilde x(t-nT_{0})) + C_{1} \|d_{|[t-nT_{0},t]}\|^{2}_{\mathrm L^2}  + C_{2} \|d_{c|[t-nT_{0},t]}\|^{2}_{\mathrm L^2} \\
    &\le \beta_{0}^{n} \tilde E(\tilde x_{0}) + C_{1} \|d_{[0,t]}\|^{2}_{\mathrm L^{2}}  + C_{2} \|d_{c|[0,t]}\|^{2}_{\mathrm L^2} .
\end{align*}
Thus, for every $(\tilde x_{0}, d, d_{c}) \in \D$ 
\begin{equation*}
    \tilde E(\tilde x(t))  
    \le \beta_{0}^{\frac{t}{T_{0}}} \tilde E(\tilde x_{0}) + C_{1} \|d_{|[0,t]}\|^{2}_{\mathrm L^2}  + C_{2} \|d_{c|[0,t]}\|^{2}_{\mathrm L^2}, \quad t>0 .
\end{equation*}
By equation \eqref{eqn:energy_equivalence}, $\tilde E(\tilde x)$ is equivalent to $\|\tilde x\|$ in the sense that there exist $\phi_{1}$, $\phi_{2} \in \K_{\infty}$ such that for all $\tilde x \in \tilde X$
\begin{equation*}
    \phi_{1}(\|\tilde x\|) \le \tilde E(\tilde x) \le \phi_{2}(\| \tilde x\|). 
\end{equation*}
We conclude that for every $(\tilde x_{0},d,d_{c}) \in \D$ and $t>0$
\begin{equation*}
    \|\tilde x(t)\|_{\tilde X} \le \beta(\|\tilde x_{0}\|_{\tilde X},t) + \gamma_{1} \left(\|d_{|[0,t]}\|_{\mathrm L^2} \right) + \gamma_{2} \left(\|d_{c|[0,t]}\|_{\mathrm L^2} \right),
\end{equation*}
where
\begin{align*}
    \beta &: (r,t) \mapsto \phi_{1}^{-1} \left(3\beta_{0}^{\frac{t}{T_{0}}} \phi_{2}(r) \right) \in \mathcal{KL} \qquad 
    \gamma_{1} : r \mapsto \phi_{1}^{-1} \left( 3 C_{1} r^{2} \right) \in \K_{\infty} \qquad
    \gamma_{2} : r \mapsto  \phi_{1}^{-1} \left( 3 C_{2} r^{2}\right) \in \K_{\infty} \\
    C_{2} &> \frac{\overline{c}_{2}}{4} \|B_{c}\|^{2} + \frac{1}{2\delta}\|S_{c}\|^{2}, \qquad C_{1} > \frac{1}{2\delta} \cdot
\end{align*}
Moreover, using the same argument as in the proof of Proposition \ref{prop:prop3.4}, we conclude that the inequality similarly holds for $\tilde x_{0} \in \tilde X$ and $d, d_{c} \in \mathrm L_{\mathrm{loc}}^{2}([0,\infty);\R^{m})$.
\qed

\begin{remark}
    In the input-to-state inequality, the decay term is exponential when expressed in terms of the energy $\tilde E$. After rewriting the estimate in terms of the state norm, the corresponding function $\beta$ can also be chosen exponential in time, but its dependence on the initial norm is not necessarily linear. Thus, this estimate does not necessarily imply global exponential stability in the state norm.
\end{remark}

\section{Application: Euler-Bernoulli beam}\label{section5}
We consider an Euler–Bernoulli beam clamped at the left
end and actuated at the right end, whose vibrations are described by 
\begin{equation}\label{eqn:EulerBeam}
    \rho \frac{\partial^2 \omega}{\partial t^2}(\zeta,t) + EI \frac{\partial^4 \omega}{\partial \zeta^4}(\zeta,t) =0, \quad t>0, \quad \zeta \in (0,1), 
\end{equation}
with the initial conditions
\begin{equation*}
    \omega(\zeta,0) = \omega^{0}(\zeta), \qquad \frac{\partial\omega}{\partial t}(\zeta,0) = \omega^{1}(\zeta) .
\end{equation*}
Here, $\omega(\zeta,t)$ denotes the vertical displacement, the positive constant $\rho$ is the mass density per unit length, and $EI > 0$ the bending stiffness of the beam, where $E> 0$ is the modulus of elasticity and $I > 0$ is the second moment of area of the cross section.
We define  
$$ x=\begin{bsmallmatrix}
    \rho \frac{\partial \omega}{\partial t} \\ \frac{\partial^2 \omega}{\partial \zeta^2}
\end{bsmallmatrix}, \qquad P_2= \begin{bmatrix}
    0 & -1 \\ 
    1 & 0
\end{bmatrix}, \qquad P_1 =0, \qquad P_0= 0, \text{ and} \qquad \mathcal{H}=\begin{bmatrix}
    \frac{1}{\rho} & 0 \\
    0 & E I
\end{bmatrix}. $$
Then the Euler–Bernoulli beam equations \eqref{eqn:EulerBeam} can be written in the form of the system class \eqref{syst:PHS} with boundary conditions given by
\begin{align*}
    \HH_{1} x_{1}(0,t) &= \HH_{1} x_{1}^\prime(0,t) = \HH_{2} x_{2}(1,t) = 0 , \\ 
    -\HH_{2} x_{2}^\prime(1,t) &= u(t), \qquad y(t) = \HH_{1} x_{1}(1,t) .
\end{align*}
To stabilize the beam vibrations, we connect the tip of the beam to a nonlinear dynamic controller described by the finite-dimensional system \eqref{syst:controller} with 
$$ \PP(v_{1})= \frac{1}{2} \|v_{1}\|^{2}+ \frac{1}{4} \|v_{1}\|^{4}, \quad \RR(v_{2})= v_{2} , \quad K=B_{c}=1, \quad \text{and} \quad S_{c}=\delta . $$
The potential represents a hardening spring, while the damping is linear and viscous.  
The state variable $v =\begin{psmallmatrix}
    v_1 \\ v_2
\end{psmallmatrix} \in \mathbb{R}^2$, where $v_1$ and $v_2$ represent virtual internal displacement and momentum, respectively. This finite-dimensional system  can be interpreted as a passive vibration-control device coupled to the beam tip.

Note that $\PP$ and $\RR$ satisfy Assumption \ref{Assumption4} and the Euler-Bernoulli beam satisfies Assumption \ref{Assumption3}. Thus, by Theorem \ref{thm:main}, we obtain that the closed-loop system is input-to-state stable.

\begin{lemma}\cite[Lemma 2.1 ]{CrepeauPrieur2006}
    The eigenfunctions $(\psi_{n})_{n\ge 1}$  of the operator 
    \begin{align*}
        \A x &= \frac{EI}{\rho}  \frac{\partial^4 x}{\partial \zeta^4}, \\
        D(\A) &= \{ \omega \in \mathrm H^{4}(0,1)\mid  \omega(0) = \frac{\partial \omega}{\partial \zeta}(0) = \frac{\partial^2 \omega}{\partial \zeta^2}(1) = \frac{\partial^3 \omega}{\partial \zeta^3}(1) = 0 \},
    \end{align*}
    are given by
    \begin{align*}
        \psi_{n}(\zeta) &= \cos(\alpha_{n}\zeta) - \cosh(\alpha_{n}\zeta) + \mu_{n} \left( \sinh(\alpha_{n}\zeta) - \sin(\alpha_{n} \zeta) \right) , \qquad \mu_{n} = \frac{\cosh(\alpha_{n})+ \cos(\alpha_{n})}{\sinh(\alpha_{n})+ \sin(\alpha_{n})}, 
    \end{align*}
    where $\alpha_{n}$ is the $n$-th positive root of
    $ 1 + \cos(\alpha_{n}) \cosh(\alpha_{n}) = 0$, and the corresponding eigenvalues $(\lambda_{n})_{n\ge 1}$
    are given by 
    $$  \lambda_{n} = \frac{EI}{\rho} \alpha_{n}^{4} . $$
\end{lemma}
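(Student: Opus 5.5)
We need to prove the eigenfunction lemma for the clamped–free beam operator $\frac{EI}{\rho}\partial_\zeta^4$ with boundary conditions $\omega(0)=\omega'(0)=\omega''(1)=\omega'''(1)=0$.

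\emph{Step 1: general solution of the eigenvalue equation.} The operator is self-adjoint and nonnegative on $\mathrm L^2(0,1)$; this follows from integrating by parts four times and using that all boundary terms vanish. Moreover, $\lambda=0$ is excluded: if $\omega''''=0$, the four boundary conditions force $\omega=0$. So we may write $\lambda=\frac{EI}{\rho}\alpha^4$ with $\alpha>0$, and the equation becomes $\omega''''=\alpha^4\omega$. Its general solution is
\[
\omega(\zeta)=a\cos(\alpha\zeta)+b\sin(\alpha\zeta)+c\cosh(\alpha\zeta)+e\sinh(\alpha\zeta).
\]

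\emph{Step 2: impose the conditions at $\zeta=0$.} From $\omega(0)=0$ we get $c=-a$. From $\omega'(0)=0$ we get $e=-b$. Renaming the free constant, this gives
\[
\omega=A\,(\cos(\alpha\zeta)-\cosh(\alpha\zeta))+B\,(\sinh(\alpha\zeta)-\sin(\alpha\zeta)).
\]

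\emph{Step 3: impose the conditions at $\zeta=1$.} Differentiating and dividing by $\alpha^2$ and $\alpha^3$ respectively, the two free-end conditions give the linear system
\begin{align*}
-A(\cos\alpha+\cosh\alpha)+B(\sinh\alpha+\sin\alpha)&=0,\\
A(\sin\alpha-\sinh\alpha)+B(\cosh\alpha+\cos\alpha)&=0.
\end{align*}
A nontrivial $(A,B)$ exists iff the determinant vanishes:
\[
-(\cos\alpha+\cosh\alpha)^2-(\sinh\alpha+\sin\alpha)(\sin\alpha-\sinh\alpha)=0.
\]
Expanding and using $\cos^2+\sin^2=1$ and $\cosh^2-\sinh^2=1$, this reduces to $-2(1+\cos\alpha\cosh\alpha)=0$.

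\emph{Step 4: read off the eigenfunction.} From the first equation, $B=\mu A$ with $\mu=\frac{\cosh\alpha+\cos\alpha}{\sinh\alpha+\sin\alpha}$. This is well defined because $\sinh\alpha+\sin\alpha>0$ for $\alpha>0$. Normalizing $A=1$ gives exactly $\psi_n$ and $\lambda_n$ as stated.

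\emph{Step 5: completeness.} The roots of $1+\cos\alpha\cosh\alpha=0$ form an increasing sequence $\alpha_n\to\infty$, with $\alpha_n\approx(2n-1)\pi/2$ since $\cosh\alpha$ grows. Each eigenvalue is simple, because the solution space of the linear system is one-dimensional. The operator has compact resolvent, so by the spectral theorem $(\psi_n)$ is the full set of eigenfunctions.

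The only slightly delicate part is the determinant simplification in Step 3; the remaining steps are routine.
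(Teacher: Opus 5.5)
Your proof is correct. The paper gives no argument for this lemma; it simply imports it from Lemma~2.1 of Cr\'epeau--Prieur. So there is no competing route to compare with, and your computation is the standard self-contained derivation behind that citation. The key steps check out. Positivity via $\langle \omega'''',\omega\rangle=\int_0^1|\omega''|^2\,d\zeta$ rules out $\lambda\le 0$. The clamped conditions leave the family $A(\cos(\alpha\zeta)-\cosh(\alpha\zeta))+B(\sinh(\alpha\zeta)-\sin(\alpha\zeta))$. The free-end determinant is indeed $-2(1+\cos\alpha\cosh\alpha)$, and $\sinh\alpha+\sin\alpha>0$ makes the kernel one-dimensional, spanned by $(1,\mu)$.

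Your Step~5 is not needed for the lemma as stated. Steps~1--4 already show that every eigenpair has the claimed form and that each root $\alpha_n$ gives exactly one eigenfunction up to scaling. What compact resolvent plus the spectral theorem really buy you is the basis statement the paper makes right after the lemma. For that basis to be \emph{orthonormal} you would also need $\|\psi_n\|_{\mathrm L^2}=1$. This does hold: integrating by parts with the conserved quantity $\alpha^4\omega^2-2\omega'\omega'''+(\omega'')^2$ gives $\int_0^1\psi_n^2\,d\zeta=\tfrac14\psi_n(1)^2$, and one checks $\psi_n(1)^2=4$ whenever $1+\cos\alpha_n\cosh\alpha_n=0$.
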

\noindent
The functions $(\psi_{n})_{n\ge1}$ form an orthonormal basis of $\mathrm L^{2}(0,1)$.
By expanding $\omega$ in terms of $(\psi_{n})_{n\ge1}$, that is, $\omega(\zeta,t) = \displaystyle\sum_{n=1}^{\infty} \omega_{n}(t) \psi_{n}(\zeta)$,
we obtain the following ODE for each $n$
\begin{equation*}
    \ddot{\omega}_{n} + \lambda_{n}\omega_{n} = \frac{1}{\rho} \psi_{n}(1) u(t) . 
\end{equation*}
Using the initial conditions, it follows
\begin{equation*}
    \omega_{n}(t) = \omega^{0}_{n} \cos \left(\sqrt{\frac{EI}{\rho}}\alpha_{n}^{2}t\right) + \frac{\omega^{1}_{n}}{\sqrt{\frac{EI}{\rho}}\alpha_{n}^{2}} \sin\left(\sqrt{\frac{EI}{\rho}}\alpha_{n}^{2}t\right) + \frac{1}{\sqrt{\frac{EI}{\rho}}\alpha_{n}^2\rho} \psi_{n}(1) \int_{0}^{t} \sin\left(\sqrt{\frac{EI}{\rho}}\alpha_{n}^{2}(t-s)\right) u(s) ds   .
\end{equation*}
For the numerical result, we have taken $\rho=EI=1$, $\delta=1$, $N=20$ and as initial conditions 
\begin{equation*}
    \omega^{0}(\zeta) = 0.8(0.05\psi_{1}(\zeta) + 0.03 \psi_{2}(\zeta)), \qquad
    \omega^{1}(\zeta) = 0.02\psi_{1}(\zeta), \text{ and} \qquad v_{0}=0 .
\end{equation*}
The disturbances at the plant and controller inputs are given by
\begin{equation*}
    d(t) = 0.7 \sin(5t), \qquad d_{c}(t) = 0.5 \cos(2t) ,
\end{equation*}
and compute the response shown in Figure \ref{fig:beam_iss}. 

\begin{figure}
    \centering
    \includegraphics[width=0.5\linewidth]{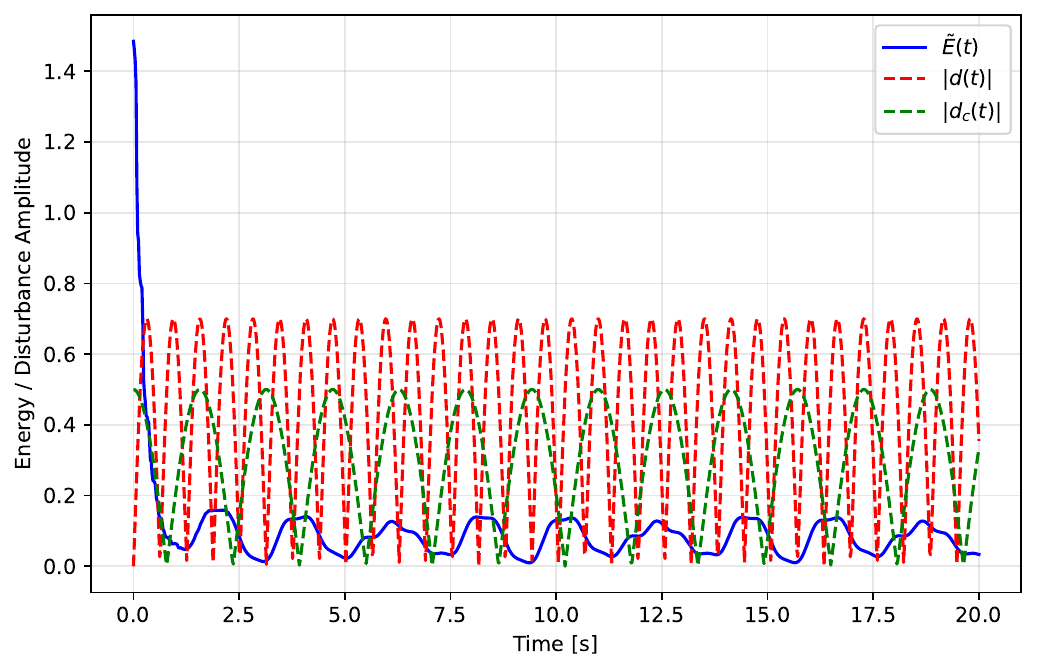}
    \caption{Energy-based ISS simulation.}
    \label{fig:beam_iss}
\end{figure}

\section{Conclusion}\label{section6}
In this paper, we studied the stabilization of a class of infinite-dimensional port-Hamiltonian systems under nonlinear boundary control in the presence of external actuator disturbances at the plant and controller inputs. We established the well-posedness of the resulting infinite-dimensional dynamics using semigroup theory and the theory of semilinear evolution equations. Under the stated sufficient conditions on the port-Hamiltonian system and the controller, we derived an input-to-state stability property for the closed-loop system by combining the energy balance of the port-Hamiltonian formulation with the dissipative properties of the finite-dimensional component.

The theoretical results are illustrated through the Euler--Bernoulli beam. This example demonstrates the applicability of the proposed framework to flexible structures, where the finite-dimensional passive subsystem can represent either an intrinsic component of the physical system, such as a tip mass, a mass-spring-damper system, or an attached oscillator, or a passive control element, such as a vibration absorber or an energy-dissipating controller.

Moreover, the obtained input-to-state stability result is not restricted to the particular class of examples considered in this paper. The stability argument also applies to higher-order port-Hamiltonian systems
\begin{equation*}
    \frac{\partial x}{\partial t}(\zeta,t) = \sum_{k=0}^{N} P_k \frac{\partial^k}{\partial \zeta^k} \HH(\zeta) x(\zeta,t) , \quad t>0, \quad \zeta \in (0,1),
\end{equation*}
provided that there exists a functional $q$ satisfying $|q(x)| \le \hat{c} \|x\|^{2}$ and 
\begin{equation*}
    E(x(t)) + \frac{d}{dt} q(x) \le c \left( |u(t)|^{2} + |y(t)|^{2} \right),
\end{equation*}
along classical solution, for some constant $\hat{c}, c>0$. Under the same assumptions on the controller, the proof then yields the corresponding ISS estimate. 
As a perspective, an interesting extension of this work is to consider more general classes of nonlinear finite-dimensional controllers beyond the strictly passive framework investigated here.

\section*{Acknowledgments}
This work was supported and funded by the European Union (Horizon Europe MSCA project ModConFlex, grant number 101073558) and by the Deutsche Forschungsgemeinschaft (DFG, German Research Foundation) Project-ID 531152215 -- CRC 1701.
Bouchra Elghazi would like to thank the GIPSA-lab for hosting her during her research visit. ChatGPT-5 was only used to proofread the manuscript.

\printbibliography

@article{GorZwaMas:2005,
author = {Le Gorrec, Y. and Zwart, H. and Maschke, B.},
title = {Dirac structures and Boundary Control Systems associated with Skew-Symmetric Differential Operators},
journal = {SIAM J. Control Optim.},
volume = {44},
number = {5},
pages = {1864--1892},
year = {2005},
doi = {10.1137/040611677},
}

@article{SchmidZwart:21,
	author = {J. Schmid and H. Zwart},
	title = {Stabilization of port-{H}amiltonian systems by nonlinear boundary control in the presence of disturbances},
	DOI= {10.1051/cocv/2021051},
	journal = {ESAIM: COCV},
	year = {2021},
	volume = {27},
	pages = {53},
}

@article{RamirezZwartLeGorrec:17,
title = {Stabilization of infinite dimensional port-{H}amiltonian systems by nonlinear dynamic boundary control},
journal = {Automatica},
volume = {85},
pages = {61--69},
year = {2017},
doi = {10.1016/j.automatica.2017.07.045},
author = {H. Ramirez and H. Zwart and Y. {Le Gorrec}},
}

@book{pazy2012semigroups,
  title={Semigroups of linear operators and applications to partial differential equations},
  author={Pazy, A.},
  volume={44},
  year={1983},
  publisher={Springer},
  series={Appl. Math. Sci.},
  doi={10.1007/978-1-4612-5561-1},
}

@phdthesis{Aug:16,
    author = {Augner, B.},
    title ={Stabilisation of Infinite-dimensional Port-{H}amiltonian Systems via Dissipative Boundary Feedback} ,
    school = {University of Wuppertal},
    year = {2016},
}

@article{Aug:19,
author = {Augner, B.},
title = {Well-Posedness and Stability of Infinite-Dimensional Linear Port-{H}amiltonian Systems with Nonlinear Boundary Feedback},
journal = {SIAM J. Control Optim.},
volume = {57},
number = {3},
pages = {1818--1844},
year = {2019},
doi = {10.1137/15M1024901},
}

@article{AugJac:14,
author = {Augner, B. and Jacob, B.},
year = {2014},
month = {12},
pages = {207-229},
title = {Stability and stabilization of infinite-dimensional linear port-{H}amiltonian systems},
volume = {3},
journal = {Evol. Equ. Control Theory},
doi = {10.3934/eect.2014.3.207},
}

@InProceedings{Aug:20,
author="Augner, B.",
title="Well-posedness and stability for interconnection structures of port-{H}amiltonian type",
booktitle="Control Theory of Infinite-Dimensional Systems",
year="2020",
publisher="Springer International Publishing",
pages="1--52",
doi = {10.1007/978-3-030-35898-3_1},
}

@article{Mironchenko:24,
  author    = {A. Mironchenko},
  title     = {Well-posedness and properties of the flow for semilinear evolution equations},
  journal   = {Math. Control Signals Syst.},
  year      = {2024},
  volume    = {36},
  number    = {3},
  pages     = {483--523},
  doi       = {10.1007/s00498-023-00378-x},
}

@article{MironchenkoPrieur:20,
author = {A. Mironchenko and C. Prieur},
title = {Input-to-State Stability of Infinite-Dimensional Systems: Recent Results and Open Questions},
journal = {SIAM Review},
volume = {62},
number = {3},
pages = {529-614},
year = {2020},
doi = {10.1137/19M1291248},
}

@article{Fattorini:68,
author = {H. O. Fattorini},
title = {Boundary Control Systems},
journal = {SIAM J. Control},
volume = {6},
number = {3},
pages = {349--385},
year = {1968},
doi = {10.1137/0306025},
}

@article{Clarke:98,
title = {Asymptotic Stability and Smooth {L}yapunov Functions},
journal = {J. Differ. Equ.},
volume = {149},
number = {1},
pages = {69--114},
year = {1998},
doi = {10.1006/jdeq.1998.3476},
author = {F. H. Clarke and Yu. S. Ledyaev and R. J. Stern},
}

@book{Vorabie:03,
title = {$C_0$-Semigroups and Applications},
author = {I. I. Vrabie},
series = {North-Holland Mathematics Studies},
publisher = {North-Holland},
volume = {191},
year = {2003},
doi = {10.1016/S0304-0208(03)80043-9},
}

@article{TuscnakWeiss:14,
title = {Well-posed systems—The {LTI} case and beyond},
author = {M. Tucsnak and G. Weiss},
journal = {Automatica},
volume = {50},
number = {7},
pages = {1757-1779},
year = {2014},
doi = {10.1016/j.automatica.2014.04.016},
}

@article{Macchelli2004,
author = {A. Macchelli and C. Melchiorri},
title = {Control by Interconnection and Energy Shaping of the {T}imoshenko Beam},
journal = {Math. Comput. Model. Dyn. Syst.},
volume = {10},
number = {3-4},
pages = {231--251},
year = {2004},
publisher = {Taylor \& Francis},
doi = {10.1080/13873950412331335243},
}

@article{CrepeauPrieur2006,
     author = {Cr\'epeau, E. and Prieur, C.},
     title = {Control of a clamped-free beam by a piezoelectric actuator},
     journal = {ESAIM Control Optim. Calc. Var.},
     pages = {545--563},
     year = {2006},
     publisher = {EDP-Sciences},
     volume = {12},
     number = {3},
     doi = {10.1051/cocv:2006008},
}

@phdthesis{Villegas:07,
    author = {Villegas, J. A.},
    title = {A Port-{H}amiltonian Approach to Distributed Parameter Systems},
    school = {University of Twente},
    year = {2007},
    doi = {10.3990/1.9789036524896},
}

@article{SchaftCervera:07,
title = {Interconnection of port-{H}amiltonian systems and composition of {D}irac structures},
journal = {Automatica},
volume = {43},
number = {2},
pages = {212-225},
year = {2007},
author = {J. Cervera and A. J. van der Schaft and A. Baños},
doi = {10.1016/j.automatica.2006.08.014},
}

@article{SchJelt:14,
title = {Port-{H}amiltonian Systems Theory: An Introductory Overview},
year = {2014},
volume = {1},
doi = {10.1561/2600000002},
number = {2-3},
pages = {173-378},
author = {van der Schaft, A. J. and Jeltsema, D},
journal = {Found. Trends Syst. Control},
}

@article{Schaft:06,
author = {van der Schaft, A. J.},
year = {2006},
pages = {1339-1366},
title = {Port-{H}amiltonian systems: an introductory survey},
volume={3},
journal = {Proc. Int. Congr. Math.},
doi = {10.4171/022-3/65},
}

@article{SchaftMaschke:02,
author = {A.J. van der Schaft and B.M. Maschke},
title = {{H}amiltonian formulation of distributed-parameter systems with boundary energy flow},
journal = {J. Geom. Phys.},
volume = {42},
number = {1},
pages = {166-194},
year = {2002},
issn = {0393-0440},
doi = {10.1016/S0393-0440(01)00083-3},
}

@ARTICLE{RashardScjaftStram:20,
  author={Rashad, R. and Califano, F. and van der Schaft, A. J. and Stramigioli, S.},
  journal={IMA J. Math. Control Inf.}, 
  title={Twenty years of distributed port-{H}amiltonian systems: a literature review}, 
  year={2020},
  volume={37},
  number={1},
  pages={1400-1422},
  doi={10.1093/imamci/dnaa018}
}

@ARTICLE{PaunLeGorrecRamirez:21,
  author={Paunonen, L. and Le Gorrec, Y. and Ramirez, H.},
  journal={IEEE Trans. Autom. Control}, 
  title={A {L}yapunov Approach to Robust Regulation of Distributed Port–{H}amiltonian Systems}, 
  year={2021},
  volume={66},
  number={12},
  pages={6041-6048},
  doi = {10.1109/TAC.2021.3069679},
}

@ARTICLE{HumLassi:18,
  author={Humaloja, J. P. and Paunonen, L.},
  journal={IEEE Trans. Autom. Control}, 
  title={Robust Regulation of Infinite-Dimensional Port-{H}amiltonian Systems}, 
  year={2018},
  volume={63},
  number={5},
  pages={1480-1486},
  doi = {10.1109/TAC.2017.2748055},
}

@inproceedings{humaloja2016robust,
  title={Robust Regulation for port-{H}amiltonian Systems of even order},
  author={Humaloja, J. P. and Paunonen, L. and Pohjolainen, S.},
  booktitle={Int. Symp. Math. Theory Netw. Syst.},
  pages={152-156},
  year={2016},
}

@inproceedings{HumLassiPohj:16,
author={Humaloja, J. P. and Paunonen, L. and Pohjolainen, S.},
year = {2016},
booktitle = {2016 Eur. Control Conf. (ECC)},
pages = {2203-2208},
title = {Robust Regulation for first-order port-{H}amiltonian Systems},
doi = {10.1109/ECC.2016.7810618},
}

@article{LeGorrecToledoRamirezWu:23,
  author={Toledo-Zucco, J. and Wu, Y. and Ramirez, H. and Le Gorrec, Y.},
  journal={IEEE Control Syst. Lett.}, 
  title={Infinite-dimensional observers for high-order boundary-controlled port-{H}amiltonian systems}, 
  year={2023},
  volume={7},
  pages={1676-1681},
  doi = {10.1109/LCSYS.2023.3278252},
}

@article{ElghJacZw:25,
    title={Well-posedness of a class of infinite-dimensional port-{H}amiltonian systems with boundary control and observation}, 
    author={B. Elghazi and B. Jacob and H. Zwart},
    journal = {IFAC-{PapersOnLine}},
    volume = {59},
    number = {8},
    pages = {102-107},
    year = {2025},
    doi = {10.1016/j.ifacol.2025.08.074},
}

@article{sontag1989,
  author  = {E. D. Sontag},
  title   = {Smooth Stabilization Implies Coprime Factorization},
  journal = {IEEE Trans. Autom. Control},
  year    = {1989},
  volume  = {34},
  number  = {4},
  pages   = {435--443}
}

@article{Sontag1995,
author = {E. D. Sontag},
title = {On the Input-to-State Stability Property},
journal = {Eur. J. Control},
volume = {1},
number = {1},
pages = {24-36},
year = {1995},
doi = {10.1016/S0947-3580(95)70005-X},
}

@book{Mironchenko2023,
  author    = {A. Mironchenko},
  title     = {Input-to-State Stability: Theory and Applications},
  publisher = {Springer},
  year      = {2023},
  doi       = {10.1007/978-3-031-14674-9}
}

@book{Krstic2019,
  author    = {I. Karafyllis and M. Krstic},
  title     = {Input-to-State Stability for PDEs},
  publisher = {Springer},
  year      = {2019},
  doi       = {10.1007/978-3-319-91011-6}
}

@article{MattioniWuLeGorrZw:22,
author = {A. Mattioni and Y. Wu and Y. {Le Gorrec} and H. Zwart},
title = {Stabilization of a class of mixed {ODE}–{PDE} port-{H}amiltonian systems with strong dissipation feedback},
journal = {Automatica},
volume = {142},
pages = {110284},
year = {2022},
issn = {0005-1098},
doi = {10.1016/j.automatica.2022.110284},
}

@ARTICLE{VillegasZwLeGorrMasche:09,
  author={Villegas, J. and Zwart, H. and Le Gorrec, Y. and Maschke, B.},
  journal={IEEE Trans. Autom. Control}, 
  title={Exponential Stability of a Class of Boundary Control Systems}, 
  year={2009},
  volume={54},
  number={1},
  pages={142-147},
  doi={10.1109/TAC.2008.2007176},
}

@article{macchelli2016synthesis,
  title={On the synthesis of boundary control laws for distributed port-{H}amiltonian systems},
  author={Macchelli, A. and Le Gorrec, Y. and Ramirez, H. and Zwart, H.},
  journal={IEEE Trans. Autom. Control},
  volume={62},
  number={4},
  pages={1700--1713},
  year={2017},
  publisher={IEEE},
  doi={10.1109/TAC.2016.2595263},
}

@ARTICLE{RamirezLeGorrMachelliZW:14,
  author={Ramirez, H. and Le Gorrec, Y. and Macchelli, A. and Zwart, H.},
  journal={IEEE Trans. Autom. Control}, 
  title={Exponential Stabilization of Boundary Controlled Port-{H}amiltonian Systems With Dynamic Feedback}, 
  year={2014},
  volume={59},
  number={10},
  pages={2849-2855},
  doi={10.1109/TAC.2014.2315754},
}

@article{MironKarafKrstic:19,
author = {Mironchenko, A. and Karafyllis, I. and Krstic, M.},
title = {Monotonicity Methods for Input-to-State Stability of Nonlinear Parabolic {PDE}s with Boundary Disturbances},
journal = {SIAM J. Control Optim.},
volume = {57},
number = {1},
pages = {510-532},
year = {2019},
doi = {10.1137/17M1161877},
}

@article{MazencPrieur:2011,
author = {F. Mazenc and C. Prieur},
title = {Strict {L}yapunov functions for semilinear parabolic partial differential equations},
journal = {Math. Control Relat. Fields},
volume = {1},
number = {2},
pages = {231-250},
year = {2011},
doi = {10.3934/mcrf.2011.1.231},
}

\end{document}